\documentclass{aims}
\usepackage{amsmath}
  \usepackage{paralist}
  \usepackage{dsfont}
  \usepackage{array}
  \usepackage{graphics} 
  \usepackage{epsfig} 
 \usepackage[colorlinks=true]{hyperref}
\hypersetup{urlcolor=blue, citecolor=red}

  \textheight=8.2 true in
   \textwidth=5.0 true in
    \topmargin 30pt
     \setcounter{page}{1}




\renewcommand{\d}[0]{%
\ensuremath{\text{d}}}

\newcommand{\integ}[3]{%
\ensuremath{\displaystyle{\int^{#2}_{#1} #3}}}

\newcommand{\Div}{\nabla \cdot}

\newcommand{\abs}[1]{\lvert #1 \rvert}

\newcommand{\ra}{\rho_{1}}

\newcommand{\rb}{\rho_{2}}
\newcommand{\rs}{\rho}
\newcommand{\ri}{\rho_{i}}
\newcommand{\V}{\mathbf{U}}
\newcommand{\ve}{\mathbf{u}}

\newcommand{\Va}{\mathbf{U}_{1}}
\newcommand{\Vb}{\mathbf{U}_{2}}
\newcommand{\Vi}{\mathbf{U}_{i}}
\newcommand{\va}{\mathbf{u}_{1}}
\newcommand{\vb}{\mathbf{u}_{2}}
\newcommand{\ua}{\mathbf{v}_{1}}
\newcommand{\ub}{\mathbf{v}_{2}}
\newcommand{\da}{D_{1}}
\newcommand{\db}{D_{2}}
\newcommand{\di}{D_{i}}
\newcommand{\w}{\mathbf{w}}
\newcommand{\vv}{\mathbf{v}}

\newtheorem{theorem}{Theorem}[section]

\newtheorem{proposition}{Proposition}

\theoremstyle{definition}
\newtheorem{definition}[theorem]{Definition}
\newtheorem{remark}{Remark}

\def \w { \mathbf{w}}
\def \c {S}
\def \ONE {1}
\def \virg {\,,\,\,}
\def \nn {\mathbf{n}}
\def \vseq { \vspace{2mm}}
\def \T {\Omega}
 \def \b {\mu}
\def \C {C}
\title[A congestion Model for Cell migration]
      {A congestion Model for Cell migration}

\author[first-name1 last-name1 and first-name2 last-name2]{}

\subjclass{Primary: 58F15, 58F17; Secondary: 53C35.}
 \keywords{Congestion, Chemotaxis, Aggregation, Optimal transport}

 \email{julien.dambrine@parisdescartes.fr}
 \email{Nicolas.Meunier@math-info.univ-paris5.fr}
 \email{Bertrand.Maury@math.u-psud.fr}
 \email{aude.roudneff@math.u-psud.fr}

\thanks{
The first and second authors are supported by the European Project ARTreat FP7 - 224297.
}

\begin{document}
\maketitle

\centerline{\scshape Julien Dambrine and Nicolas Meunier}
\medskip
{\footnotesize
 \centerline{MAP5, UFR de Math\'ematiques et Informatique }
\centerline{Universit\'e Paris Descartes, 45 rue des Saints-P\`eres, 75270 Paris Cedex 06 }
} 

\medskip

\centerline{\scshape Bertrand Maury and Aude Roudneff-Chupin}
\medskip
{\footnotesize
 \centerline{ Laboratoire de Math\'ematiques d'Orsay,
Universit\'e Paris-Sud,
91405 Orsay Cedex}
}

\bigskip

 \centerline{(Communicated by the associate editor name)}

\begin{abstract}
This paper deals with a class of macroscopic models for cell migration in a saturated medium for   two-species mixtures. Those species tend to achieve some motion according to a desired velocity, and congestion   forces them to adapt their velocity. This adaptation is modelled by a correction velocity which is chosen minimal in a least-square sense.
We are especially interested in two situations: a single active species  moves in a passive matrix (cell migration) with a given desired velocity, and a closed-loop Keller-Segel  type model, where the desired velocity  is the gradient of a self-emitted chemoattractant.

We propose a theoretical framework for the open-loop model (desired velocities are defined  as gradients of given functions) based on a formulation in the form of a gradient flow in the Wasserstein space.
We propose a numerical strategy to discretize the model, and illustrate its behaviour in the case of a prescribed velocity, and for the saturated  Keller-Segel model. 



\end{abstract}

\section{Introduction, modelling aspects}
\label{sec:model}

We propose a model  to handle 
 interactions between organisms such as unicellular organisms \textit{e.g.} bacteria or am\oe bia.
 %
  The behavior of an entity is based on a will to move with its desired velocity, regardless of others, but the fulfillment of individual wills is made impossible because of congestion. 
We consider here the case of two organisms (see Section~\ref{sec:conc} for a straightforward extension to several populations).
%
We describe the densities by measures $\ra$ and $\rb$.
We assume global saturation of the domain, i.e. $\ra+\rb = 1$.
%
In order to take into account the individual tendencies together with congestion constraints, we consider that individuals have two velocities, namely a \textit{desired} velocity that will be denoted by $\Va$ (\textit{resp.} $\Vb$) and a common  \textit{correction} velocity that will be denoted by $\w$. Densities $\ra$ and $\rb$  satisfy:
\begin{equation}\label{eq:model}
\partial _t \ri +\Div (\ri\left (\Vi+\w)\right)  =0\quad i=1,\, 2.\\
\end{equation}
We consider the correction velocity which minimizes $L^2$ norm among all those velocity fields which ensure preservation of the saturation constraint $\ra+\rb=1$, which can be expressed in a dual, Darcy-like, form:
\begin{equation}
\label{eq:darcy}
\left \{
\begin{array}{lcl}
\w + \nabla p &= &0 \\
\Div \w &=& -\Div \left ( \ra\Va + \rb\Vb\right ) .
\end{array}
\right .
\end{equation}




This basic model corresponds  to a competition between two species which tend to achieve a prescribed motion, and realize a sort of compromise. 
We shall be especially interested in the following  situations:
\begin{enumerate}

\item[(i)] Competition between species which tend to minimize a given function: $\Vi$ is defined as  $-\nabla\di$, with $\ra\Va+\rb\Vb$ not feasible (i.e. leading to violation of the saturation constraint) in general.
This setting, which extends the model proposed in~\cite{MauRouSan} to a two-population situation with distinct tendencies, shall be the core of the theoretical analysis proposed in section~\ref{sec:theo}.

\item[(ii)] Desired velocity for species $2$ is zero, and $\Va$ is prescribed as the gradient of a given function $\c$ (concentration of a chemoattractant).  Note that this case, which 
 corresponds to the migration of cells in a passive biological matrix, as encountered in the developpment of  atherosclerosis, is a particular case of  $(i)$.

\item[(iii)]  Keller-Segel model with congestion (closed-loop version of the basic model):
$\Vb$ is again $0$, and $\Va$ is defined as $\nabla \c$, where $\c$ is a chemotactic agent  created by the species $1$ itself. Assuming $\c$  diffuses in the mixture domain, we shall consider
$$
\partial _t \c - \Delta \c = \ra, 
$$
or, assuming diffusion is instantaneous, $-\Delta \c = \ra$.

\end{enumerate}

The question of boundary conditions rises delicate issues.  Setting equation in the whole space amounts to deal with infinite quantities of $1$ and/or $2$ (the sum of densities is equal to $1$), which rules out standard tools for theoretical  analysis and numerical simulation.
One may consider that both populations occupy a bounded, moving domain $\Omega(t)$,  on the boundary of which pressure is  zero. It is then natural to consider that the boundary of $\Omega$ moves with a normal velocity which identifies to the normal velocity of the mixture $(\ra \Va + \rb\Vb + \w)\cdot \nn$.
This approach is surely relevant in some situations, yet we shall 
 not consider it in this paper. Let us give an idea of one of the difficulties which are likely to occur: consider the monodimensional situation with initial condition and desired velocities:
$$
\ra =  \ONE_{(0,1)}\virg   \rb =  \ONE_{(-1,0)}\virg \Va = 1\virg \Vb = 0.
$$
Translation of those intervals at constant speed $1/2$ is obviously  a solution to our problem (with piecewise affine pressure, $0$ at both ends, $1/2$ at the interface).
Now consider the scenario where $1$ runs away with velocity $1$, and $2$  stays where it is (both species are separated, and the domain $\Omega(t)$ is no longer convex). One can even imagine that half of $1$ (initially located in $  \ONE_{(1/2,1)}$) takes off with velocity $1$, while the other half of $1$ trails $2$ at speed $1/3$.
 In this spirit, one can  create  an infinite number   of scenarios which all seem to be, in a reasonable sense, solutions to our problem. 
 Note that those problems are likely to occur in the case of expansion fields, as the one considered previously, because of the loss of monotonicity of the underlying evolution equation.

 Another approach consists in considering a fixed domain $\Omega$, delimited by rigid walls. This assumption is reasonable if one considers migration phenomena in living organisms.  The boundary condition for Darcy equations is of Neumann type: no-flux condition at walls  leads to $(\ra \Va + \rb\Vb + \w)\cdot \nn = 0$, i.e. 
 $$
\frac {  \partial p}{\partial n  } = (\ra \Va + \rb\Vb )\cdot \nn.
$$
This condition ensures no-flux for the mixture, but not for both species independently;  it may allow some out- or in-flux of either $1$ or $2$, which calls for further prescriptions on the boundary.
The situations we shall consider in next sections suggest that, at least in the case  $\Vb = 0$,  the model tends to create  pure zones (either $1$ only, or no $1$ at all) in the neighbourhood of boundaries. As a consequence, global no flux conditions turn  into  no flux conditions for both species. Yet, in 2 or 3 dimensions and for non zero desired velocities, one may have to consider more general situations. 
To overcome those difficulties, we shall consider in the theoretical part (Section~\ref{sec:theo}) the fully periodic setting.

In a different context, such saturated two-component mixtures have been studied in~\cite{otto97}, with a motion driven by chemical potentials.
In the biological context, 
a Keller-Segel model with logistic sensitivity (KSLS) has been proposed recently~\cite{dolak,DalibardPerthame}. This KSLS model reads
\begin{equation}
\label{eq:KSLS}
\partial _t \rs +\Div \left (\rs\left (1-\rs\right)\V\right) =0, 
\end{equation}
where the ``desired'' velocity $\V$ is of the chemotactic type:
 $$
\V = \nabla S \virg \partial_t S - \Delta S = \rho.
 $$

First of all, in one dimension (say $\Omega = (0,1)$) with  no-flux boundary conditions at $x=1$, our model reads
$$
\partial _t \rs+\partial_x \left( \rs(U-\partial _x p)\right)  =0\,, 
$$
$$
-\partial _{xx}p= -\partial_x (\rs U)
$$
with  (no flux condition) $\rs  U- \partial _x p = 0$ at $1$, so that $\partial_x p = \rho U$ in the whole interval, 
 and therefore it   identifies exactly with  KSLS. 
 Note that in case the velocity is given and constant, we recover an inviscid Burgers equation.
 
Both models are different for $d \geq 2$. Yet, they present some common structure which can be described as follows:
consider (situation $(iii)$ above)  the case of zero velocity for species 2, and velocity for 1 given as the gradient of a density $\c$ of chemoattractant created by $1$  itself.
We consider  the biperiodic setting, and define $\Delta ^{-1}$ as the operator which maps a function $g$ with zero mean value over the biperiodic domain onto the zero mean value solution  to 
$$
\Delta u = g.
$$
The pressure in our model  can then be written $p = \Delta^{-1} \Div  (\rs\V)$, so that transport equation for species $1$ becomes
$$
\partial _t \rs +\Div \left (\rs\left (\V -   \nabla   \Delta^{-1} \Div  (\rs\V)           \right)\right) =0.
$$
which is formally equivalent to KSLS model~\eqref{eq:KSLS} with identity operator replaced by  Helmoltz projection $ \nabla   \Delta^{-1} \Div $ (projection onto the space of irrotational fields).
Nonlocality of the latter operator differentiate both models. Yet both PDE's systems are quite similar from the spectral point of view. 

As for modelling aspects, they rely on different assumptions:
KSLS model can be seen as standard KS model with a reduced velocity $\V = (1-\rs) \nabla \c$, which can express for example the fact that when entities reach some critical density (set to $1$ here), they disturb each other and are no longer able to sense the underlying gradient. 
In our approach, we consider that entities continue to exert some action which would lead them in the right direction if they were alone, but they are (mechanically) prevented from fulfilling their purpose, because of the presence of other entities (possibly with different ``strategies''). The model is in some way less constrained as motion of saturated  zones is possible, but also more constrained because the other species (which replaces empty space in KSLS) has to be swept away, at some price,  by  species $1$.

\section{Theoretical  framework}
\label{sec:theo}


We consider in this section the case of $2$ species in the flat torus $\T = S^1\times S^1$, with desired velocities prescribed as follows
$$\Va = - \nabla \da, \; \Vb = - \nabla \db, $$
where $\da$ and $\db$ are given smooth functions over $\T$. 
The system corresponding to this situation writes
\begin{equation}
\left\{ \begin{array}{rcl}
\partial_{t} \ra + \nabla \cdot (\ra (\Va + \w)) & = & 0\vseq\\
\partial_{t} \rb + \nabla \cdot (\rb (\Vb + \w))  & = & 0\vseq\\
\w &=& -\nabla p \vseq\\
-\Delta p & = & - \nabla \cdot (\ra \Va + \rb \Vb).
\end{array} \right.
\label{eq:demo}
\end{equation}

Although concentrations for both species automatically admit a density which is in $L^{\infty}$ (thanks to the congestion constraint), we shall keep the general setting of measures, which is usually adopted in optimal transport, and which allows for generalisations (e.g. relaxing the congestion constraint in some zones). Besides, we disregard normalization to obtain probability measures:
we shall consider that $\rho_1\otimes \rho_2$ belongs to 
$\mathcal{P}(\T\times \T)$, although the total mass is not $1$. 
Similarly, we consider that $\rho_i \in \mathcal{P}(\T)$.

In what follows, regularity of functions defined over $\T$ accounts for periodic conditions.
In particular  $H^1(\T)$ is defined as the set of biperiodic functions with $H^1$ regularity (closure for the  $H^1$ norm of biperiodic regular functions).

\begin{definition}[Weak solutions]
We say that $(\ra, \rb)$ is a weak solution of~\eqref{eq:demo} with initial condition $(\ra^0, \rb^0)$ if  $\ra+\rb = 1$ for a.e. $t$, and if 
there exists $p \in H^1(\Omega)$ such that for all $\varphi_{1}, \varphi_{2} \in \mathcal{C}_{c}^{\infty}([0,T[ \times \T)$,  for a.e. $t\in (0,T)$,  and for all $q \in H^1(\T)$, we have
$$\int_{0}^T \int_{\T} \left( \partial_{t} \varphi_{1} + \nabla \varphi_{1} \cdot (\Va - \nabla p) \right) d\ra \; + \; \int_{\T} \varphi_{1}(0,.) d\ra^0 \; = \; 0$$
$$
\int_{0}^T \int_{\T} \left( \partial_{t} \varphi_{2} + \nabla \varphi_{2} \cdot (\Vb - \nabla p) \right) d\rb \; + \; \int_{\T} \varphi_{2}(0,.) d\rb^0 \; = \; 0,
$$
$$\int_{\T} \nabla p \cdot \nabla  q  \; = \; \int_{\T} \nabla q \cdot \Va \, d\ra \; + \; \int_{\T} \nabla q \cdot \Vb \, d\rb.
$$
\label{weak:sol}
\end{definition}

Let $\rho = \ra \otimes \rb
$ for a.e. $t$, $\V = (\Va, \Vb)$. We can rewrite the previous formulation as follows: for all $\varphi \in \mathcal{C}_{c}^{\infty}([0,T[ \times (\T \times \T))$ 
$$
\int_{0}^T \int_{\T \times \T} (\partial_{t} \varphi + 
 \langle \V-(\nabla p, \nabla p), \nabla \varphi \rangle
) \, d\rho \; + \; \int_{\T \times \T} \varphi(0,.) \, d\rho^0 \; = \; 0.
$$
  and for a.e. $t\in (0,T)$, all $q \in H^1(\T)$, 
$$\int_{\T} \nabla p \cdot \nabla q  \; = \; \int_{\T \times \T} \langle (\nabla q, \nabla q), \V \rangle \, d\rho.
$$

Let us state the main result of this section:
\begin{theorem}[Existence of a solution]
If $\da$ and $\db$ are Lipschitz functions, and $(\ra^0, \rb^0)$ positive measures that satisfy $\ra^0 + \rb^0 = 1$ a.e., then equation \eqref{eq:demo} admits at least a weak solution, according to definition \ref{weak:sol}.
\label{thm:existence}
\end{theorem}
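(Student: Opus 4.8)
The plan is to construct the solution by a time-discretization scheme of Jordan–Kinderlehrer–Otto (JKO) type in the Wasserstein space, exploiting the gradient-flow structure suggested in the introduction. Fix a time step $\tau > 0$, and view the pair of densities as a single measure $\rho = \ra \otimes \rb \in \mathcal{P}(\T \times \T)$ constrained to the set $K = \{\ra + \rb = 1 \text{ a.e.}\}$; equip $\T \times \T$ with the product Wasserstein distance. The energy to be decreased along the flow is $E(\ra,\rb) = \int_\T \da \, d\ra + \int_\T \db \, d\rb$, whose "gradient'' generates precisely the desired velocities $\Va = -\nabla \da$, $\Vb = -\nabla \db$. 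Starting from $(\ra^0,\rb^0) \in K$, define inductively $(\ra^{k+1},\rb^{k+1})$ as a minimizer of
$$
(\ra,\rb) \;\longmapsto\; E(\ra,\rb) \;+\; \frac{1}{2\tau}\left( W_2^2(\ra,\ra^k) + W_2^2(\rb,\rb^k) \right)
$$
over the constraint set $K$. Existence of minimizers at each step follows from the direct method: the constraint set $K$ is closed under weak-$*$ convergence and tight (masses are fixed, domain compact), $W_2^2$ is weak-$*$ lower semicontinuous, and $E$ is continuous because $\da,\db$ are continuous (Lipschitz) and bounded on the compact torus.

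Next I would derive the Euler–Lagrange / optimality conditions for each step. Using the standard technique of perturbing a minimizer by the flow of a smooth vector field (and handling the saturation constraint $\ra + \rb = 1$ by a Lagrange multiplier, which will play the role of the pressure $p$), one gets that the optimal transport maps from $(\ra^k,\rb^k)$ to $(\ra^{k+1},\rb^{k+1})$ are, to first order in $\tau$, displacements along $\Va - \nabla p^{k+1}$ and $\Vb - \nabla p^{k+1}$ respectively, where $p^{k+1} \in H^1(\T)$ solves the Darcy equation $-\Delta p^{k+1} = -\nabla\cdot(\ra^{k+1}\Va + \rb^{k+1}\Vb)$ in the weak sense appearing in Definition~\ref{weak:sol}. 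The key point here is that the Lagrange multiplier is a single function on $\T$ (not on $\T\times\T$), because the constraint couples the two marginals through the common space variable; this is what forces a \emph{common} correction velocity $\w = -\nabla p$. I would also record the a priori estimates: the standard telescoping inequality $\sum_k \frac{1}{2\tau}(W_2^2(\ra^{k+1},\ra^k) + W_2^2(\rb^{k+1},\rb^k)) \le E(\ra^0,\rb^0) - \inf E \le 2(\|\da\|_\infty + \|\db\|_\infty)$, which gives the discrete $H^1$-in-time bound, and a uniform $H^1(\T)$ bound on the pressures $p^{k+1}$ obtained by testing the Darcy equation against $p^{k+1}$ itself and using $0 \le \ra,\rb \le 1$ together with the Lipschitz bounds on $\da,\db$.

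Then I would pass to the limit $\tau \to 0$. Define the piecewise-constant (or piecewise-geodesic) interpolants $\ra^\tau(t), \rb^\tau(t)$ and the piecewise-constant pressure $p^\tau(t)$. From the $W_2$ estimates one gets equi-Hölder-continuity in time of $t \mapsto (\ra^\tau,\rb^\tau)$ for $W_2$, hence, by Ascoli plus compactness of $(\mathcal{P}(\T), W_2)$, a subsequence converging uniformly in $t$ and weakly-$*$ in space to some $(\ra,\rb)$; the closedness of $K$ gives $\ra + \rb = 1$ a.e. for a.e.\ $t$. The pressure bound gives $p^\tau \rightharpoonup p$ weakly in $L^2(0,T;H^1(\T))$. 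Passing to the limit in the discrete weak formulation of the transport equations and of the Darcy equation is then routine: the transport terms are linear in $d\rho$ against fixed smooth test functions, the desired-velocity terms $\nabla\da, \nabla\db$ are continuous (here Lipschitz of $\da,\db$ is exactly what is needed), and the only mildly delicate term is the product $\nabla p^\tau \cdot (\text{something against } d\rho^\tau)$, which I would handle by writing it as $\langle$ weak-$L^2$ limit $\rangle$ tested against a strongly convergent object, or by noting that in the transport equation $\nabla p^\tau$ is integrated against $\nabla\varphi_i \, d\rho_i^\tau$ with $\nabla\varphi_i$ fixed and smooth while $\rho_i^\tau \to \rho_i$ narrowly — one needs the standard argument that narrow convergence of measures plus weak $L^2$ convergence of the integrand suffices once one has, say, also strong $L^2$ convergence of a mollification, or one invokes a div-curl / compensated-compactness type lemma.

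\textbf{Main obstacle.} I expect the hard part to be the passage to the limit in the \emph{nonlinear} term coupling the pressure gradient to the (weakly convergent) densities in the continuity equations — i.e.\ justifying $\int\!\!\int \nabla p^\tau \cdot \nabla\varphi_i \, d\rho_i^\tau \to \int\!\!\int \nabla p \cdot \nabla\varphi_i \, d\rho_i$, since $\nabla p^\tau$ converges only weakly in $L^2$ while $\rho_i^\tau$ converges only narrowly (no strong spatial compactness of the densities in a good enough topology is immediate). The remedy is to upgrade compactness: either (a) use the time-regularity from the $W_2$ bounds together with the spatial regularity of the flow to get strong $L^1_{loc}$ convergence of the densities (an Aubin–Lions–Simon argument adapted to the Wasserstein setting), or (b) observe that since $\ra^\tau, \rb^\tau$ are bounded in $L^\infty$ by the saturation constraint, they converge weakly-$*$ in $L^\infty$, and combine this with the strong convergence obtainable for $\nabla p^\tau$ if one can prove an extra estimate (e.g.\ a bound on $\partial_t \Delta^{-1}(\ra^\tau)$ type quantity, or a uniform modulus of continuity in time for the pressure inherited from the continuity equation). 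Closing this compactness gap is the crux; everything else is the now-standard JKO machinery, with the one genuinely model-specific feature being the identification of the Lagrange multiplier for the saturation constraint as the scalar pressure driving the common correction velocity.
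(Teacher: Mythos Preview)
Your plan is sound and would work; it is essentially the direct route taken in~\cite{MauRouSan} for the one-species crowd model, which the paper explicitly acknowledges could be carried out here as well. The paper, however, does something different. Rather than extracting a discrete pressure $p^{k+1}$ from the Euler--Lagrange conditions at each JKO step and then passing to the limit in the explicit system (continuity equations + Darcy equation), the paper works purely at the level of the abstract functional $\Phi = J + I_K$ on $\mathcal{P}(\T\times\T)$: it verifies that $\Phi$ is proper, lower semicontinuous, has compact sublevels, and is \emph{regular} in the sense of Ambrosio--Gigli--Savar\'e (closure of the subdifferential under weak limits), and then invokes the general AGS convergence theorems to obtain a limit curve $\rho(t)$ satisfying $\partial_t\rho + \nabla\cdot(\rho\,\ve)=0$ with $\ve(t)\in -\partial\Phi(\rho(t))$ for a.e.\ $t$. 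Only \emph{after} the limit is taken does the paper identify the subdifferential: it shows $\ve\in C_\rho$ and, by perturbing with flows $\mathbf{id}+\varepsilon\mathbf{v}$ for $\mathbf{v}\in C_\rho$ (and correcting back into $K$ at cost $o(\varepsilon)$), that $-\nabla D-\ve\in C_\rho^\perp$, so $\ve$ is the $L^2(\rho)$-projection of $(\Va,\Vb)$ onto $C_\rho$. The pressure $p$ then appears only at the very end, as the Lagrange multiplier of the saddle-point formulation of this projection.

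The practical difference is exactly the obstacle you flagged. Your route forces you to confront the weak--weak product $\rho_i^\tau\,\nabla p^\tau$ in the limit, and you would need the extra compactness you sketch (Aubin--Lions in Wasserstein, or strong convergence of one factor). The paper's route trades this for two other obligations: checking AGS regularity of $\Phi$ (a closure property of $\partial\Phi$), and constructing the $o(\varepsilon)$ correction map $\mathbf{t}^\varepsilon$ that restores membership in $K$ after a small admissible perturbation. Neither route is free, but the paper's avoids any discrete-to-continuous limit in the pressure; the pressure never exists at the $\tau$-level.
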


The proof of this theorem 
relies on the notion of gradient flow in the Wasserstein space (see e.g.~\cite{Amb}). 
Theoretical analysis of a similar problem in the context of  crowd motions was proposed in~\cite{MauRouSan}. The proof proposed therein is quite general, and the same approach could be carried out in the present context. Yet, to avoid some technicalities and to give a clearer view of the underlying gradient flow structure, we propose here an alternative approach which is directly based on the main existence and characterization theorems in~\cite{Amb}. The rest of this section describes the main outlines of this approach, which is based on theories of optimal transport and gradient flows, which we will use to prove theorem~\ref{thm:existence}. For more details, see the books of Villani \cite{Vil1}, \cite{Vil2}, and Ambrosio et al.~\cite{Amb}, \cite{AmbSav}. See  also the recent application of this framework~\cite{OttoGigli} 
to recover entropic solutions to the Burgers' equation (which identifies in some way to the proposed model for $d=1$, as pointed out in the introduction).

We 
define a discrete scheme for~\eqref{eq:demo}:
\begin{definition}[JKO scheme]{}
Let $\tau > 0$ be given, and $\rho^0 = \ra^0 \otimes \rb^0 \in \mathcal{P}(\T\times \T)$ an initial density. We define $\rho^1, \ldots, \rho^n$ (we drop the dependence upon $\tau$ to alleviate notations) recursively according to
\begin{equation} 
\rho^n \in \mathop{\textmd{ argmin }}\limits_{\rho \in \mathcal{P}(\T\times \T)} \left (J(\rho) + I_{K}(\rho) + \dfrac{1}{2 \tau} W_{2}^2(\rho, \rho^{n-1}) \right ) 
\label{eq:JKO}
\end{equation}
where $J$ is given by
$$J(\rho) = \int_{\T \times \T} (\da(x_1) + \db(x_2)) d \rho (x_1, x_2)$$
and $K$ is the set of admissible densities
$$K = \{ \mu \in \mathcal{P}(\T \times \T) : \mu = \mu_{1} \otimes \mu_{2}, \; \mu_{1} + \mu_{2} = 1 \textmd{ a.e.} \}.
$$
The notation $I_{K}$ stands for the indicatrix function of $K$, i.e.
$$I_{K}(\rho) = \left\{ \begin{array}{cl}
0 & \textmd{ if } \rho \in K\\
+\infty & \textmd{ if } \rho \not \in K.
\end{array} \right.$$
\end{definition}
This is a well-known scheme in gradient flow theory (see \cite{DeG}, \cite{JorKinOtt}, \cite{BenBre}, \cite{Amb}, \cite{AmbSav}), which has been widely used to prove existence theorems. Taking a minimizing sequence of the right-hand side of \eqref{eq:JKO}, we can verify that every of these minimizing problems admit at least a solution.

Let us underline that here the JKO scheme is only a tool to prove the existence of a solution to our problem, and has not been used for numerical purposes. As a matter of fact, the numerical resolution at each time step of the minimisation problem leads to many difficulties, and we have chosen a totally different approach for the numerical tests in section \ref{sec:num}.

\begin{proposition}[Distance between two product measures]{}
If $\mu = \mu_{1} \otimes \mu_{2}, \nu = \nu_{1} \otimes \nu_{2}$, then 
$$
W_{2}^2(\mu, \nu) = W_{2}^2(\mu_{1}, \nu_{1}) + W_{2}^2(\mu_{2}, \nu_{2}).
$$
Moreover, if $\mathbf{r}_{1}$ (resp. $\mathbf{r}_{2}$) is the optimal transport between $\mu_{1}$ and $\nu_{1}$ (resp. $\mu_{2}$ and $\nu_{2}$), then $\mathbf{r} = (\mathbf{r}_{1}, \mathbf{r}_{2})$ is the optimal transport between $\mu$ and $\nu$.
\end{proposition}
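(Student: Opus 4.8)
The plan is to establish the claimed identity by a double inequality, using the Kantorovich (transport plan) formulation of $W_2$, which is available since $\T\times\T$ is compact. The structural fact that drives everything is that $\T\times\T$ carries the product metric $d\big((x_1,x_2),(y_1,y_2)\big)^2=d(x_1,y_1)^2+d(x_2,y_2)^2$, so that for any coupling $\gamma$ of two measures on $\T\times\T$ the transport cost splits as $\int d(x_1,y_1)^2\,d\gamma+\int d(x_2,y_2)^2\,d\gamma$. Throughout I write $\Pi(\alpha,\beta)$ for the set of couplings of $\alpha$ and $\beta$.

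For the inequality $\le$, I would take optimal couplings $\gamma_1\in\Pi(\mu_1,\nu_1)$ and $\gamma_2\in\Pi(\mu_2,\nu_2)$, regard $\gamma_1\otimes\gamma_2$ as a measure on $(\T\times\T)\times(\T\times\T)$ with coordinates $\big((x_1,y_1),(x_2,y_2)\big)$, and push it forward by the coordinate permutation $\big((x_1,y_1),(x_2,y_2)\big)\mapsto\big((x_1,x_2),(y_1,y_2)\big)$ to obtain a measure $\gamma$. A direct check of marginals gives $\gamma\in\Pi(\mu_1\otimes\mu_2,\nu_1\otimes\nu_2)=\Pi(\mu,\nu)$, and the cost splitting together with the product structure of $\gamma_1\otimes\gamma_2$ yields $\int d^2\,d\gamma=W_2^2(\mu_1,\nu_1)+W_2^2(\mu_2,\nu_2)$, hence $W_2^2(\mu,\nu)\le W_2^2(\mu_1,\nu_1)+W_2^2(\mu_2,\nu_2)$.

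For the reverse inequality, I would take an arbitrary $\gamma\in\Pi(\mu,\nu)$ and push it forward by the two projections $\big((x_1,x_2),(y_1,y_2)\big)\mapsto(x_1,y_1)$ and $\mapsto(x_2,y_2)$ to get $\gamma_1$ and $\gamma_2$; since $\mu=\mu_1\otimes\mu_2$ and $\nu=\nu_1\otimes\nu_2$, their marginals are exactly as required, i.e.\ $\gamma_i\in\Pi(\mu_i,\nu_i)$. The cost splitting then gives $\int d^2\,d\gamma=\int d(x_1,y_1)^2\,d\gamma_1+\int d(x_2,y_2)^2\,d\gamma_2\ge W_2^2(\mu_1,\nu_1)+W_2^2(\mu_2,\nu_2)$, and taking the infimum over $\gamma$ yields $W_2^2(\mu,\nu)\ge W_2^2(\mu_1,\nu_1)+W_2^2(\mu_2,\nu_2)$. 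Combining the two bounds proves the identity.

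For the statement on maps, given the optimal maps $\mathbf{r}_1,\mathbf{r}_2$, set $\mathbf{r}=(\mathbf{r}_1,\mathbf{r}_2)$; then $\mathbf{r}_{\#}\mu=(\mathbf{r}_1)_{\#}\mu_1\otimes(\mathbf{r}_2)_{\#}\mu_2=\nu$, so $\mathbf{r}$ is an admissible transport map, and its cost $\int\big(d(\mathbf{r}_1(x_1),x_1)^2+d(\mathbf{r}_2(x_2),x_2)^2\big)\,d\mu=W_2^2(\mu_1,\nu_1)+W_2^2(\mu_2,\nu_2)$ equals $W_2^2(\mu,\nu)$ by the identity just proved, so $\mathbf{r}$ is optimal. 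This argument is almost entirely bookkeeping and I do not expect any serious obstacle: the only points that need a little care are making the coordinate permutation and projections precise and verifying that the constructed and projected measures carry the prescribed marginals --- which is where the product structure of $\mu$ and $\nu$ enters --- together with compactness of $\T$ to move freely between the plan and map formulations.
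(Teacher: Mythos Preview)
Your argument is correct. Both inequalities go through exactly as you describe: the coordinate-permuted product of optimal plans gives the upper bound, and projecting an arbitrary plan onto the two factor spaces gives the lower bound. (A minor remark: for the $\ge$ direction the product structure of $\mu$ and $\nu$ is not actually needed---the projections $\gamma_i$ automatically have marginals equal to the $i$-th marginals of $\mu$ and $\nu$; the product hypothesis is only used in the $\le$ direction and in the map statement.)

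The paper's proof takes a different route for the reverse inequality: it obtains $\le$ from the transport map $(\mathbf{r}_1,\mathbf{r}_2)$ directly, and for $\ge$ it invokes Kantorovich duality, taking optimal potentials $\varphi_i,\psi_i$ for each factor and using $\varphi_1(x_1)+\varphi_2(x_2)+\psi_1(y_1)+\psi_2(y_2)\le d(x_1,y_1)^2+d(x_2,y_2)^2$ as an admissible pair for the dual problem on $\T\times\T$. Your approach stays entirely on the primal side and is more self-contained, since it does not appeal to the duality theorem; the paper's version is terser but imports a heavier tool. Either argument is perfectly adequate here.
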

\begin{proof} 
As $(\mathbf{r}_{1}, \mathbf{r}_{2})$ transports $\mu$ to $\nu$, the previous definition of $W_{2}$ gives $W_{2}^2(\mu, \nu) \leq W_{2}^2(\mu_{1}, \nu_{1}) + W_{2}^2(\mu_{2}, \nu_{2})$. To obtain the converse inequality, we simply use the dual definition of $W_{2}$ with Kantorovich potentials (see \cite{Vil1}).
\end{proof}

Thanks to the previous proposition, the JKO scheme can be rewritten as follows
\begin{equation}
(\ra^n, \rb^n) \in \mathop{\textmd{ argmin }}\limits_{\ra + \rb = 1} \left( \int_{\T} \da d\ra + \int_{\T} \db d\rb + \dfrac{1}{2 \tau} W_{2}^2(\ra, \ra^{n-1}) + \dfrac{1}{2 \tau} W_{2}^2(\rb, \rb^{n-1}) \right)
\label{eq:JKO:reform}
\end{equation}


\begin{proposition}[Convergence of the JKO scheme]
Let $\rho_{\tau} $ be the piecewise constant interpolation (in time) of the sequence $(\rho^n)_{n}$ defined by the JKO scheme\eqref{eq:JKO} for the time step $\tau$. Under the assumptions of theorem \ref{thm:existence}, there exists a subsequence of $(\rho_{\tau})_{\tau}$ which converges weakly to a weak solution of the transport equation
$$
\partial_{t} \rho + \nabla \cdot (\rho \ve) = 0$$
where $\ve$ verifies
$$
\ve \in - \partial (J + I_{K})(\rho).
$$
\label{prop:convergence}
\end{proposition}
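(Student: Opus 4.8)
The plan is to realise the gradient flow of the functional $\phi := J + I_K$ on the Wasserstein space $(\mathcal{P}(\T\times\T), W_2)$ through the minimizing-movement (JKO) scheme~\eqref{eq:JKO}, relying directly on the existence and characterization results of~\cite{Amb}. Existence of a minimizer at each step has already been granted above, and by the Proposition on distances between product measures the scheme is the one written in~\eqref{eq:JKO:reform}; note also that $\phi$ is proper (finite at $\rho^0 \in K$) and bounded below, since $\da,\db$ are bounded on the compact torus. Along the scheme the masses of the two marginals are conserved, so we may work in a fixed-mass sector, which is compact and metrized by $W_2$ because $\T\times\T$ is compact.

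\emph{A priori estimates and compactness.} Since $K$ is narrowly closed, $I_K(\rho^n)=0$, and testing~\eqref{eq:JKO} with the competitor $\rho^{n-1}$ gives $J(\rho^n) + \frac{1}{2\tau}W_2^2(\rho^n,\rho^{n-1}) \le J(\rho^{n-1})$. Summing in $n$ yields $\sum_{n}\frac{1}{2\tau}W_2^2(\rho^n,\rho^{n-1}) \le J(\rho^0) - \inf_K J < \infty$, a bound independent of $\tau$. By Cauchy--Schwarz this is the classical estimate ensuring that the piecewise-constant interpolants $\rho_\tau$ satisfy $W_2(\rho_\tau(t),\rho_\tau(s)) \le C\,(|t-s| + \tau)^{1/2}$ uniformly in $\tau$. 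The refined Ascoli--Arzelà theorem (\cite[Prop.~3.3.1]{Amb}) then provides a subsequence converging uniformly on $[0,T]$ to a curve $\rho$ which is $1/2$-Hölder in $W_2$, hence $2$-absolutely continuous; by narrow closedness of $K$, $\rho(t) = \ra(t)\otimes\rb(t)\in K$ for every $t$.

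\emph{Identification of the limit.} By the representation of absolutely continuous curves in $(\mathcal{P}_2, W_2)$ (\cite[Thm.~8.3.1]{Amb}) there is a Borel velocity field $\ve$ with $\|\ve(t,\cdot)\|_{L^2(\rho(t))} = |\rho'|(t) \in L^2(0,T)$ and $\partial_t\rho + \nabla\cdot(\rho\ve) = 0$ in the distributional sense. It remains to show $\ve(t) \in -\partial\phi(\rho(t))$ for a.e.\ $t$. I would do this through the Euler--Lagrange equation of the JKO step (\cite[Ch.~10]{Amb}): the minimizer $\rho^n$ comes with a \emph{discrete velocity} $\ve_\tau^n$, supported on $\rho^n$, with $\|\ve_\tau^n\|_{L^2(\rho^n)} = W_2(\rho^n,\rho^{n-1})/\tau$ and $\ve_\tau^n \in -\partial\phi(\rho^n)$. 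The a priori estimate makes the piecewise-constant interpolation $\ve_\tau$ bounded in $L^2(0,T;L^2(\rho_\tau))$; passing to a weak limit and into the continuity equation identifies it, in its minimal-norm representative, with $\ve$, and the passage from $\ve_\tau(t)\in -\partial\phi(\rho_\tau(t))$ to $\ve(t)\in -\partial\phi(\rho(t))$ follows from the strong--weak closedness of the graph of $\partial\phi$ along narrowly converging curves. Here one uses that $K$ is \emph{geodesically convex} --- the constraint $\ra+\rb = 1$ a.e., at fixed total mass $|\T|$, is equivalent to $\ra+\rb\le 1$, which is stable along $W_2$-geodesics --- so $\partial I_K$ has closed graph, while $J$ is $W_2$-Lipschitz with Wasserstein gradient $(\nabla\da,\nabla\db) = (-\Va,-\Vb)$.

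\emph{Main obstacle.} The crux is this last closedness step. When $\da,\db$ are merely Lipschitz they need not be semiconcave, so $J$ (hence $\phi$) is $\lambda$-geodesically convex only for a possibly strongly negative $\lambda$, and the closedness results of~\cite{Amb} do not apply off the shelf. I would resolve this either by exploiting that $J$ is \emph{globally} $W_2$-Lipschitz, so the part of the subdifferential it contributes is a uniformly bounded field for which weak $L^2$ convergence against the narrowly converging $\rho_\tau$ suffices, thereby isolating the genuinely convex part $I_K$ whose subdifferential has closed graph; or, more robustly, by first proving the proposition for smooth $\da,\db$ (where~\cite{Amb} applies verbatim) and then removing the regularity assumption by a stability argument with respect to the data $\da,\db$ in the $W_2$ topology, in the spirit of~\cite{MauRouSan}.
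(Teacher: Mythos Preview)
Your proposal is correct and follows essentially the same route as the paper: both reduce the proof to checking that $\Phi = J + I_K$ satisfies the abstract hypotheses of the convergence theorems in~\cite{Amb} (the paper cites Prop.~2.2.3, Thm.~2.3.1, and Thm.~11.1.3), with the decisive property being exactly what you call the ``strong--weak closedness of the graph of $\partial\phi$'' and what the paper names \emph{regularity} in the sense of Definition~10.1.4 of~\cite{Amb}. The paper simply asserts this regularity and invokes the theorem numbers; you spell out the compactness and identification mechanics that those theorems package.

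One remark on your ``Main obstacle'' paragraph: the concern about $\lambda$-geodesic convexity is a slight detour. The paper does not go through $\lambda$-convexity at all; Thm.~11.1.3 of~\cite{Amb} requires only properness, lower semicontinuity, compact sublevels (all trivial on the compact torus), and the regularity condition itself. So closedness of $\partial\Phi$ is a hypothesis to be verified directly, not deduced from convexity. Your first proposed verification---treat $J$ as a $W_2$-Lipschitz perturbation whose Wasserstein gradient $(\nabla\da,\nabla\db)$ is a fixed bounded field passing harmlessly to the limit, and handle $I_K$ via geodesic convexity of $K$---is in fact a clean way to establish regularity, and is more explicit than the paper, which merely states ``we can prove that $\Phi$ is regular''. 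The smoothing-and-stability alternative is unnecessary once you take this direct route.
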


\begin{proof}
We have to verify that the functionnal $\Phi := J + I_{K}$ satisfies the assumptions of the theory developed by Ambrosio et al. in \cite{Amb}. $\Phi$ is clearly proper and lower semicontinuous, and its sublevels $[\Phi(\rho) \leq c]$ are compact. Moreover, we can prove that $\Phi$ is regular according to definition 10.1.4 in \cite{Amb}, i.e. for all sequence $\rho^n = (\ra^{n} \otimes \rb^n) \in K$, for all  $\mathbf{u}^{n} \in \partial \Phi(\rho^n)$ such that $(\rho^n)$ narrowly converges towards $\rho \textmd{ in } \mathcal{P}(\T \times \T)$, and
$$\mathop{\sup}\limits_{n} ||\mathbf{u}^{n}||_{L^2(\mu_{n})} < + \infty, \quad \mathbf{u}^{n} \rightharpoonup \mathbf{u} \textmd{ weakly }$$
then $\mathbf{u} \in \partial \Phi (\rho)$.
All assumptions of Prop. 2.2.3, Th. 2.3.1, and Th. 11.1.3 of \cite{Amb} are satisfied, therefore there exists a subsequence of $(\rho_{\tau})_{\tau}$ which converges to a weak solution of 
$$\partial_t \rho + \nabla \cdot (\rho \ve) = 0,$$
with
$$\ve \in - \partial \Phi (\rho) \quad \hbox{for a.e. } t.
$$
\end{proof}

\begin{proposition}
The limit of $(\rho_{\tau})_{\tau}$ in proposition \ref{prop:convergence} is actually a weak solution of equation \eqref{eq:demo}.
\end{proposition}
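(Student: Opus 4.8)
The plan is to read the equation off from a characterization of the Wasserstein subdifferential $-\partial\Phi(\rho)$ at an admissible point $\rho=\ra\otimes\rb\in K$, where $\Phi=J+I_{K}$. Since $J$ is the smooth potential energy $J(\rho)=\int_{\T\times\T}(\da(x_1)+\db(x_2))\,d\rho$, its Wasserstein subdifferential reduces to the single gradient $(\nabla\da,\nabla\db)=-\V$. Writing the subdifferential inequality for $\Phi$ against competitors $\nu\in K$ and Taylor-expanding $J$ along the optimal transport, I would deduce that every $\ve\in-\partial\Phi(\rho)$ splits as $\ve=\V-\xi$ with $\xi:=\V-\ve\in\partial I_{K}(\rho)$, the normal cone to $K$ at $\rho$. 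The core of the proof is then to show
$$
\partial I_{K}(\rho)\;=\;\bigl\{\,(\nabla p,\nabla p)\ :\ p\in H^{1}(\T)\,\bigr\},
$$
the ``double pressure gradient'' built from one and the \emph{same} scalar field $p$ on the physical torus $\T$ (acting on $x_1$ in the first slot, on $x_2$ in the second). For the inclusion ``$\supseteq$'', a second-order Taylor expansion of $p$ along the optimal map, together with the fact that both $\rho$ and any competitor $\nu\in K$ have marginals summing to $1$, gives $\langle(\nabla p,\nabla p),\mathbf{t}_{\rho}^{\nu}-\mathrm{id}\rangle=O\!\bigl(W_{2}(\rho,\nu)^{2}\bigr)$, whence $(\nabla p,\nabla p)\in\partial I_{K}(\rho)$; here the Proposition on distances between product measures is used to split the optimal map as $(\mathbf{r}_{1},\mathbf{r}_{2})$. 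For ``$\subseteq$'' I would decompose an arbitrary element by a weighted Helmholtz projection in $L^{2}(\ra)\times L^{2}(\rb)$ and match it with the pressure characterization of~\cite{MauRouSan}; note that, the saturation constraint being an equality active everywhere, $p$ carries no sign condition, contrary to the one-species congestion model.

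Granting this, $\ve$ has the separated form $\ve=(\Va-\nabla p,\ \Vb-\nabla p)$ with $p\in H^{1}(\T)$ depending on the physical variable only. Inserting test functions $\varphi_{1}(t,x_1)$ and $\varphi_{2}(t,x_2)$ into the continuity equation on $\T\times\T$ provided by Proposition~\ref{prop:convergence}, and using $\rho=\ra\otimes\rb$, I would recover the first two weak transport equations of Definition~\ref{weak:sol} for $\ra$ and $\rb$, with velocities $\Va-\nabla p$ and $\Vb-\nabla p$. To obtain the Darcy equation I would use that $t\mapsto\rho(t)$ is absolutely continuous for $W_{2}$ and that $K$ is closed under narrow convergence (a narrow limit of product measures is a product measure, and $\mu_{1}+\mu_{2}=1$ is a closed constraint on densities bounded by $1$), so that $\ra(t)+\rb(t)=1$ for \emph{every} $t$. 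Testing the two transport equations against a time-independent $q\in H^{1}(\T)$ and adding, the time derivative of $\int_{\T}q\,d(\ra+\rb)=\int_{\T}q\,dx$ vanishes, which yields
$$
\int_{\T}\nabla q\cdot\Va\,d\ra+\int_{\T}\nabla q\cdot\Vb\,d\rb\;=\;\int_{\T}\nabla q\cdot\nabla p\,(d\ra+d\rb)\;=\;\int_{\T}\nabla p\cdot\nabla q ,
$$
since $\ra+\rb$ is Lebesgue measure; this is exactly the weak Darcy equation. Finally $p\in H^{1}(\T)$: as $\da,\db$ are Lipschitz we have $\Va,\Vb\in L^{\infty}$, hence $\ra\Va+\rb\Vb\in L^{\infty}$, and $p$ is the zero-mean solution of $-\Delta p=-\Div(\ra\Va+\rb\Vb)$ given by Lax--Milgram on the torus.

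I expect the main obstacle to be the exact description of $\partial I_{K}(\rho)$: showing it contains nothing beyond the double gradients $(\nabla p,\nabla p)$, and that $p$ genuinely lives on $\T$ rather than on $\T\times\T$. The difficulties are that $I_{K}$ is not $\lambda$-geodesically convex, that the weighted spaces $L^{2}(\ra),L^{2}(\rb)$ degenerate wherever one of the densities vanishes, and that one must combine the product-measure structure with a weighted Helmholtz decomposition — which is precisely where the machinery of regular functionals and the characterization theorems of~\cite{Amb}, together with the pressure analysis of~\cite{MauRouSan}, come in. Once that point is settled, the remaining steps — restricting the continuity equation to each marginal and differentiating the saturation constraint in time — are routine.
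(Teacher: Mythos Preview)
Your outline is sound and lands at the same conclusion as the paper, but the organization is genuinely different. The paper never attempts to characterize $\partial I_{K}(\rho)$ as a set. Instead it argues directly with the projection: from the continuity equation and $\ra+\rb=1$ it first gets $\ve\in C_{\rho}=\{(\ua,\ub):\Div(\ra\ua+\rb\ub)=0\}$; then, using the \emph{strong} subdifferential inequality with perturbations $\mathbf{r}^{\varepsilon}=\mathrm{id}+\varepsilon\mathbf{v}$ for $\mathbf{v}\in C_{\rho}$, it shows $-\nabla D-\ve\in C_{\rho}^{\perp}$, so $\ve=P_{C_{\rho}}(\V)$; a saddle-point argument then produces the single pressure $p$ with $\ve=\V-(\nabla p,\nabla p)$. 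Your route instead packages the same information as an identity $\partial I_{K}(\rho)=\{(\nabla p,\nabla p):p\in H^{1}(\T)\}$ and reads off $\V-\ve\in\partial I_{K}(\rho)$. Both are legitimate; yours is conceptually cleaner (it isolates the constraint from the potential), while the paper's is slightly more economical because it never needs the full normal-cone description --- it only uses $\V-\ve\perp C_{\rho}$ for the one vector at hand, together with the independently known fact $\ve\in C_{\rho}$.

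The place to be careful is exactly the one you flag, the inclusion $\partial I_{K}(\rho)\subseteq\{(\nabla p,\nabla p)\}$. Your sketch (``weighted Helmholtz in $L^{2}(\ra)\times L^{2}(\rb)$ plus the pressure analysis of~\cite{MauRouSan}'') does not quite name the mechanism. A Helmholtz decomposition will split any $\xi$ into a double-gradient part and a $C_{\rho}$-part, but to kill the latter you must test $\xi$ against every $\mathbf{v}\in C_{\rho}$ through the subdifferential inequality --- and $(\mathrm{id}+\varepsilon\mathbf{v})_{\#}\rho$ is \emph{not} in $K$, so the inequality gives nothing directly. The paper's device is a corrector $\mathbf{t}^{\varepsilon}$ with $(\mathbf{t}^{\varepsilon}\circ\mathbf{r}^{\varepsilon})_{\#}\rho\in K$ and $W_{2}\bigl((\mathbf{t}^{\varepsilon}\circ\mathbf{r}^{\varepsilon})_{\#}\rho,\mathbf{r}^{\varepsilon}_{\#}\rho\bigr)=o(\varepsilon)$; this is the two-species, equality-constraint analogue of the projection step in~\cite{MauRouSan}, and it is what makes $C_{\rho}$ the tangent space to $K$ at first order. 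Once you insert this lemma, your ``$\subseteq$'' goes through (and since $C_{\rho}$ is linear, the one-sided inequality becomes an orthogonality, yielding $C_{\rho}^{\perp}$; the identification $C_{\rho}^{\perp}=\{(\nabla p,\nabla p)\}$ then follows because $\|(\nabla p,\nabla p)\|^{2}_{L^{2}(\ra)\times L^{2}(\rb)}=\int_{\T}|\nabla p|^{2}$ thanks to $\ra+\rb=1$, so the range of $p\mapsto(\nabla p,\nabla p)$ is closed). The remaining steps you describe --- restricting to the marginals and differentiating the saturation constraint to get Darcy --- match the paper exactly.
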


\begin{proof}
It is quite easy to verify that when $\tau$ converges to $0$, the properties $\rho(t,.) = \ra(t,.) \otimes \rb(t,.)$ with $\ra(t,x) + \rb(t,x) = 1$ for a.e. $x$, and $\ve(x,y) = (\va(x), \vb(y))$ still hold true. Therefore, the continuity equation rewrites
\begin{equation}
\left\{ \begin{array}{rcl}
\partial_t \ra + \nabla \cdot (\ra \va) & = & 0\\
\partial _t\rb + \nabla \cdot (\rb \vb) & = & 0
\end{array} \right. ,
\label{eq:continuity}
\end{equation}
and the condition $\ra + \rb = 1$ gives the following equation on $\va$ and $\vb$
$$\nabla \cdot (\ra \va + \rb \vb) = 0.$$

We now use the fact that $- \ve$ is a strong subdifferential, i.e. that for all transport map $\mathbf{r}$, we have
$$\Phi(\rho) - \int_{\T \times \T} \langle \ve, \mathbf{r} - \mathbf{id} \rangle \rho \; \leq \; \Phi(\mathbf{r}_{\#} \rho) + o\left(||\mathbf{r} - \mathbf{id} ||_{L^2(\rho)}\right).$$
Let us underline that if $\mathbf{r}_{\#} \rho \not \in K$, the previous inequality does not give any information, as $\Phi(\mathbf{r}_{\#} \rho) = + \infty$.

We define the set of admissible velocities as follows
$$
C_{\rho} := \{ \vv = (\ua, \ub) \in L^2(\ra) \times L^2( \rb) : \nabla \cdot (\ra \ua + \rb \ub) = 0 \}.
$$
We just proved that $\ve \in C_{\rho}$. Let $\mathbf{v} \in C_{\rho}$, $\varepsilon \not = 0$, and $\mathbf{r}^{\varepsilon} = \mathbf{id} + \varepsilon \mathbf{v}$. In most cases, we have $\mathbf{r}^{\varepsilon}_{\#}\rho \not\in K$. However, it is possible to find a transport $\mathbf{t}^{\varepsilon}$ such that $(\mathbf{t}^{\varepsilon} \circ \mathbf{r}^{\varepsilon})_{\#} \rho \in K$, and $W_{2}((\mathbf{t}^{\varepsilon} \circ \mathbf{r}^{\varepsilon})_{\#} \rho, \mathbf{r}^{\varepsilon}_{\#} \rho) = o(\varepsilon)$. 
The subdifferential inequality applied to $\mathbf{t}^{\varepsilon} \circ \mathbf{r}^{\varepsilon}$ gives
$$
\hspace*{-3cm} \int_{\T \times \T} (\da + \db)\,d \rho -  \int_{\T \times \T} \langle \ve, \mathbf{t}^{\varepsilon} \circ \mathbf{r}^{\varepsilon} - \mathbf{id} \rangle \,d\rho 
$$
$$\hspace*{4cm} \leq \; \int_{\T \times \T} (\da + \db) (\mathbf{t}^{\varepsilon} \circ \mathbf{r}^{\varepsilon})_{\#}  \rho + o\left(||\mathbf{t}^{\varepsilon} \circ \mathbf{r}^{\varepsilon} - \mathbf{id} ||_{L^2(\rho)}\right).
$$
Using estimates between $(\mathbf{t}^{\varepsilon} \circ \mathbf{r}^{\varepsilon})_{\#} \rho$ and $\mathbf{r}^{\varepsilon}_{\#} \rho$, we get
$$\displaystyle \int_{\T \times \T} (\da + \db) \, d\rho -  \int_{\T \times \T} \langle \ve, \mathbf{r}^{\varepsilon} - \mathbf{id} \rangle \,d\rho \; \leq \;   \displaystyle \int_{\T \times \T} (\da + \db) \mathbf{r}^{\varepsilon}_{\#}  \rho + o(\varepsilon),$$
which implies
$$ \displaystyle  \int_{\T \times \T} \langle - \nabla D - \ve, \varepsilon \mathbf{v} \rangle\, d \rho \; \leq \;  \displaystyle o(\varepsilon).$$
Letting $\varepsilon$ go to $0$ from positive and negative values, we obtain
$$\int_{\T \times \T} \langle - \nabla D - \ve, \mathbf{v} \rangle \, d\rho \; = \;  0,$$
which means that $- \nabla D - \ve \in C_{\rho}^{\perp}$. As $\ve \in C_{\rho}$, $\ve$ is indeed the projection of $- \nabla D = (\Va, \Vb) $ onto  $C_{\rho}$, i.e.
$$
\ve = \mathop{\textmd{ argmin }}\limits_{\mathbf{v} \in C_{\rho}} \int_{\T \times \T} |\V - \mathbf{v}|^2 d\rho.
$$
This minimizing problem can be written as a saddle-point problem for which we can prove existence and uniqueness of a solution $(\ve, p)$ which satisfies
$$\ve + (\nabla p, \nabla p) = \V,$$
i.e.
$$\left\{ \begin{array}{rcl}
\va = \Va - \nabla p\vseq\\
\vb = \Vb - \nabla p
\end{array} \right. .$$
Moreover, since $\nabla \cdot (\ra \va + \rb \vb) = 0$, the equation on $p$ is given by
$$-\Delta p =- \nabla \cdot (\ra \nabla p + \rb \nabla p) =- \nabla \cdot (\ra \Va + \rb \Vb).$$
If we replace the expression of $\ve$ in equations \eqref{eq:continuity}, we finally get 
$$\left\{ \begin{array}{l}
\partial_{t} \ra + \nabla \cdot (\ra (\Va - \nabla p)) \; = \; 0\vseq\\
\partial_{t} \rb + \nabla \cdot (\rb (\Vb - \nabla p))  \; = \; 0\vseq\\
-\Delta p \; = \;  - \nabla \cdot (\ra \Va + \rb \Vb)
\end{array} \right. ,$$
i.e. $\rho$ is a solution of~\eqref{eq:demo}.
\end{proof}
 

\begin{remark} (Uniqueness)
We focused here on the proof of existence of a solution. Under reasonable assumptions on $D_1$ and $D_2$, it is to be expected that the JKO process leads to a unique solution (see~\cite{MauRouSan} for remarks regarding uniqueness in a similar setting). Yet, as we pointed out in the introduction, the system is under some conditions equivalent  to the standard inviscid Burgers' equation, which rules out uniqueness in general.
 It is  natural to wonder whether
 the JKO scheme selects a particular solution, namely the entropic one. This delicate question is still widely open, but a recent work on Burgers equation (\cite{OttoGigli}) suggests  a positive answer.
 \end{remark}


\section{Numerical methods and results}
\label{sec:num}
We consider here the transport  model  with congestion, with one active species only ($\Va = \V$, $\Vb = 0$) expressed in terms of the  active species only:
\begin{align}
 \partial_t \rs + \nabla \cdot ( \rs\, (\V  + \w)) & = 0 \, , \label{Num-1} \vseq\\
 \w &=-\nabla p \, , \label{Num-2} \vseq\\
- \Delta p &  = - \nabla \cdot ( \rs\V \, ) \, . \label{Num-3}
\end{align}
\subsection{Discretization and maximum principle}
First, a time discretization of the system \eqref{Num-1}-\eqref{Num-3} is given. We set
$
t^n = n \,  \delta t $, and
\begin{equation}
\rs^n(x) \sim \rs(t^n,x) \, , \quad  \w^n(x) \sim \w(t^n,x) \, ,  \quad  \V^n(x) \sim \V(t^n,x) \, , \quad p^n(x) \sim p(t^n,x) \,  . \notag
\end{equation}
Equation~\eqref{Num-1} is approached by using a backwards Euler finite difference method. Semi-discretized version of System~\eqref{Num-1}-\eqref{Num-3} hence reads:
\begin{eqnarray}
 \rs^{n+1} &= &\rs^n  - \delta t \,  \nabla \cdot ( \rs^n \, (\V^n + \w^n)) \, , \label{Num-4} \vseq\\
 \w^n&=& - \nabla p^n \, , \label{Num-5} \vseq\\
- \Delta p^n &= & -\nabla \cdot ( \rs^n \, \V^n) \, . \label{Num-6}
\end{eqnarray}
For the sake of simplicity, we only give the 1D spatial discretization of~\eqref{Num-4}-\eqref{Num-6}, the extension to 2D or 3D on cartesian grids being straightforward.

 We introduce:
\begin{equation}
x_i = i \,  \delta x \, , \quad  i=1, \dots, N_x \, . \notag
\end{equation}
Since the equations of the model are written in a conservative form, the natural framework to be used for the spatial discretization of \eqref{Num-4}-\eqref{Num-6} is the finite volume framework. 
We hence introduce the control volume defined by:
\begin{equation}
\C_{i}=\left (  x_{i-\frac{1}{2}} \, , \,  x_{i+\frac{1}{2}} \right ) \, . \notag
\end{equation}
We denote by $(\rho^n_i)_{i}$ and $(p_i^n)_i$ the piecewise constant approximations  of densities and pressures at time $t^n$ (i.e. $\rho^n_i$ stands for the value of $\rho$ at cell center $x_i$, at time $t^n$). As for flux variables, we define approximate values at interfaces: $\w_i^n$ (resp.  $\V_i^n$) stands for  $\w$ (resp. $\V$) at interface $x_{i-\frac{1}{2}}$,  

By integrating~\eqref{Num-4} over $\C_{i}$ and by using the divergence theorem, we obtain
\begin{equation}
\integ{\C_{i}}{}{\rs^{n+1}\, \d x} = \integ{\C_{i}}{}{\rs^{n}\, \d x}  - \delta t \, \left[ \rs^n  (\V^n + \w^n) \right]^{x_{i+\frac{1}{2}}}_{x_{i-\frac{1}{2}}} \, .  \label{Num-5pa}
\end{equation}
We approximate the integral terms in the following way:
\begin{equation}
\integ{\C_{i}}{}{\rs^{n+1}\, \d x} \sim \rs_{i}^{n+1} \, \delta x \, , \quad \integ{\C_{i}}{}{\rs^{n}\, \d x} \sim \rs_{i}^{n} \, \delta x \, . \notag
\end{equation}
An upwind discrete flux is then introduced  to approximate the remaining flux term in \eqref{Num-5pa}: 
 \begin{align}
  \rs^n(x_{i-\frac{1}{2}})  \left( \V^n(x_{i-\frac{1}{2}}) + \w^n(x_{i-\frac{1}{2}}) \right) \sim  A^{up} \left( \V^n_i ,\rs^n_{i-1},\rs^n_i \right) + A^{up} \left( \w^n_i,\rs^n_{i-1},\rs^n_i \right) \label{Num-6pp} 
 \end{align}
 where the numerical flux $A^{up}$ is defined by
\begin{equation}
A^{up}(u,\, \rs^-, \, \rs^+) =
\begin{cases}
u\, \rs^-  \;\;\;\; \text{if } u>0   \, ,  \notag \\
u\, \rs^+  \;\;\;\; \text{if } u<0  \, . \notag
\end{cases}
\end{equation}
We finally obtain ;
\begin{align}
\rs^{n+1}_i =\rs^{n}_i &-\dfrac{\delta t}{\delta x}  \left( A^{up} \left( \V^n_{i+1} ,\rs^n_i,\rs^n_{i+1} \right) - A^{up} \left( \V^n_i ,\rs^n_{i-1},\rs^n_i \right) \right) \notag \\ 
&-\dfrac{\delta t}{\delta x}  \left( A^{up} \left( \w^n_{i+1} ,\rs^n_i,\rs^n_{i+1} \right) - A^{up} \left( \w^n_i ,\rs^n_{i-1},\rs^n_i \right) \right)\, . \label{Num-7} 
\end{align}
It is important to notice that  fluxes $\rs \V$ and $\rs \w$ were treated separately in~\eqref{Num-6pp}. As we will see, this plays an essential role in preserving the maximum principle on $\rs$ for the numerical solution. This advection scheme is stable under the Courant-Friedrichs-Lewy condition :
\begin{equation}
\delta t <\dfrac{1}{2} \cdot \dfrac{\delta x}{ \abs{\V^n}_{\infty} + \abs{\w^n}_{\infty} } \label{Num-7p} \, .
\end{equation} 
Eq.~\eqref{Num-6} is discretized in space with
\begin{equation}
\dfrac{p^n_{i+1}-2 \,p^n_{i} + p^n_{i-1} }{\delta x^2} =\dfrac{1}{\delta x}  \left (A^{up} \left( \V^n_{i+1} ,\rs^n_i,\rs^n_{i+1} \right) - A^{up} \left( \V^n_i ,\rs^n_{i-1},\rs^n_i \right) \right )  \, .  \label{Num-8} 
\end{equation}
Finally, by using an Euler finite difference scheme in~\eqref{Num-5},  correction velocity $\w$ is approximated with
\begin{equation}
\w_i ^n=-  \dfrac{p^n_{i}-p^n_{i-1}}{\delta x} .  \label{Num-9}  \\
\end{equation}
\begin{proposition}
The numerical scheme~\eqref{Num-7}-\eqref{Num-9} satisfies the following  maximum principle: 
\begin{align}
 \text{if } \; & 0 \leq \rs^0_{i} \leq 1\, ,  \, \, \forall \, i=1,\dots,N_x \, \notag \\
 \text{then } \; & 0 \leq \rs^n_{i} \leq 1\, , \, \, \forall \,  i=1,\dots,N_x \quad  \forall \, n  .\notag
\end{align}
\end{proposition}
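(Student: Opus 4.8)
The plan is to prove both bounds at once by induction on $n$, with induction hypothesis $0\le\rs^n_i\le 1$ for all $i$. At each step $p^n$ and $\w^n$ are the quantities built from $\rs^n$ via~\eqref{Num-8}--\eqref{Num-9}, so $\abs{\w^n}_\infty$ in the CFL condition~\eqref{Num-7p} is well defined once $\rs^n$ is known; I will only use~\eqref{Num-7p} together with $0\le\rs^n_i\le1$ to reach $0\le\rs^{n+1}_i\le1$.

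For the lower bound I would expand the upwind fluxes in~\eqref{Num-7} with positive and negative parts, $A^{up}(u,a,b)=u^+a-u^-b$ where $u^+=\max(u,0)$, $u^-=\max(-u,0)$, and collect the coefficients of $\rs^n_{i-1},\rs^n_i,\rs^n_{i+1}$ in the update. The coefficients of the two neighbours are manifestly $\ge0$, and the coefficient of $\rs^n_i$ is $1-\tfrac{\delta t}{\delta x}\big((\V^n_{i+1})^++(\V^n_i)^-+(\w^n_{i+1})^++(\w^n_i)^-\big)\ge 1-\tfrac{2\,\delta t}{\delta x}\big(\abs{\V^n}_\infty+\abs{\w^n}_\infty\big)\ge0$ precisely under~\eqref{Num-7p}. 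Hence $\rs^{n+1}_i$ is a nonnegative combination of the nonnegative numbers $\rs^n_{i\pm1},\rs^n_i$, so $\rs^{n+1}_i\ge0$. This is the classical monotonicity argument for upwind schemes.

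The substance is in the upper bound, where the key point — flagged in the text after~\eqref{Num-6pp} — is to pass to the complementary phase $\sigma^n_i:=1-\rs^n_i$ and exploit that $\rs\V$ and $\rs\w$ were discretized separately. Two elementary identities do the work: first, for any interface velocity $u$, $A^{up}(u,1-a,1-b)=u-A^{up}(u,a,b)$ (check the cases $u>0$ and $u<0$); second, the discrete Poisson equation~\eqref{Num-8} together with $\w^n_i=-(p^n_i-p^n_{i-1})/\delta x$ telescopes to $\w^n_{i+1}-\w^n_i=-\big(A^{up}(\V^n_{i+1},\rs^n_i,\rs^n_{i+1})-A^{up}(\V^n_i,\rs^n_{i-1},\rs^n_i)\big)$. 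Forming $1-\rs^{n+1}_i$ from~\eqref{Num-7} and inserting these two facts, the entire $\V$-flux contribution cancels and one is left with
$$
\sigma^{n+1}_i=\sigma^n_i-\frac{\delta t}{\delta x}\Big(A^{up}(\w^n_{i+1},\sigma^n_i,\sigma^n_{i+1})-A^{up}(\w^n_i,\sigma^n_{i-1},\sigma^n_i)\Big),
$$
which is exactly scheme~\eqref{Num-7} with $\V^n\equiv0$ run on $\sigma^n$. The computation of the lower-bound step, with the $\V$ terms deleted, then gives $\sigma^{n+1}_i\ge0$ as soon as $\tfrac{2\,\delta t}{\delta x}\abs{\w^n}_\infty\le1$, which is weaker than~\eqref{Num-7p}; since $\sigma^n_j=1-\rs^n_j\ge0$ by the induction hypothesis, we conclude $\rs^{n+1}_i=1-\sigma^{n+1}_i\le1$, closing the induction.

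I expect the only real obstacle to be the bookkeeping behind the telescoping identity and the cancellation of the $\V$-fluxes in the $\sigma$-update — i.e. making transparent that the discrete pressure is constructed precisely so that the discrete divergence of $\rs^n(\V^n+\w^n)$ vanishes, which is what forces $1-\rs$ to be transported by $\w^n$ alone. One should also fix the discrete boundary conditions (periodic, or a no-flux condition at $x_{\frac{1}{2}}$ and $x_{N_x+\frac{1}{2}}$) under which~\eqref{Num-8}--\eqref{Num-9} and the telescoping relation hold up to the endpoints; in the no-flux case the boundary values of the $\V$- and $\w$-fluxes vanish and the argument is unchanged there.
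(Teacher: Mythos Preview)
Your proposal is correct and follows essentially the same route as the paper: positivity of $\rs^{n+1}$ by the standard upwind monotonicity argument under the CFL condition, and the upper bound by passing to the complementary phase $\sigma^n_i=1-\rs^n_i$, using the identity $A^{up}(u,1-a,1-b)=u-A^{up}(u,a,b)$ together with the discrete Poisson relation~\eqref{Num-8}--\eqref{Num-9} to cancel the $\V$-fluxes and reduce the $\sigma$-update to a pure upwind scheme in $\w$. The paper carries out exactly this computation (with $\mu$ in place of your $\sigma$) under periodic boundary conditions, noting as you do that the no-flux variant works the same way.
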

\begin{proof}
For the sake of simplicity, only periodic boundary conditions are considered in the following proof:
\begin{align}
& \rs^n_1 = \rs^n_{N_x} \, , \notag \\
& p^n_1 =p^n_{N_x}\, . \notag
\end{align}
Note that, in practical, the proposition remains true if the no-flux boundary condition (mentioned in Section 1) is taken over $p$. In this case, appropriate boundary conditions have to be taken upon $\rs$ in the upwind scheme:
\begin{align}
& A^{up} \left( u_1 ,\rs_0,\rs_1 \right) = 0  \, , \notag \\
& A^{up} \left( u_{N_x+1} ,\rs_{N_x},\rs_{N_x+1} \right) = 0 \, . \notag
\end{align}
First the positivity of the numerical scheme is given by the well known positivity of the upwind scheme under the C.F.L. condition \eqref{Num-7p}. For the upper bound $\rs^n_{i} \leq 1$, the proof relies on the positivity of $\b^n_i := 1-\rs^n_i$, that is given by a similar numerical scheme. Indeed, equation \eqref{Num-7} implies
\begin{align}
\b^{n+1}_i =\b^{n}_i &+\dfrac{\delta t}{\delta x}  \left( A^{up} \left( \V^n_{i+1} ,\rs^n_i,\rs^n_{i+1} \right) - A^{up} \left( \V^n_i ,\rs^n_{i-1},\rs^n_i \right) \right) \notag \\ 
&+\dfrac{\delta t}{\delta x}  \left( A^{up} \left( \w^n_{i+1} ,1-\b^n_i,1-\b^n_{i+1} \right) - A^{up} \left( \w^n_i ,1-\b^n_{i-1},1-\b^n_i \right) \right)\, . \notag 
\end{align}
Since $A^{up}(u, 1-\rs^+,1-\rs^-)=u-A^{up}(u,\rs^+,\rs^-)$ we have
\begin{align}
\b^{n+1}_i =\b^{n}_i &+\dfrac{\delta t}{\delta x}  \left( A^{up} \left( \V^n_{i+1} ,\rs^n_i,\rs^n_{i+1} \right) - A^{up} \left( \V^n_i ,\rs^n_{i-1},\rs^n_i \right) \right) + \dfrac{\delta t}{\delta x} \left( \w^n_{i+1} - \w^n_{i} \right)  \notag\\
&- \dfrac{\delta t}{\delta x}  \left( A^{up} \left( \w^n_{i+1} ,\b^n_i,\b^n_{i+1} \right) - A^{up} \left( \w^n_i ,\b^n_{i-1},\b^n_i \right) \right)\, . \notag
\end{align}
By replacing the values of $\w^n_i$ with the discrete gradient of $p$ as shown in \eqref{Num-9}, one gets
\begin{align}
\b^{n+1}_i =\b^{n}_i &+\dfrac{\delta t}{\delta x}  \left( A^{up} \left( \V^n_{i+1} ,\rs^n_i,\rs^n_{i+1} \right) - A^{up} \left( \V^n_i ,\rs^n_{i-1},\rs^n_i \right) \right) \notag \\
&- \dfrac{\delta t}{\delta x} \left( \dfrac{p^n_{i+1} - 2\, p^n_i + p^n_{i-1}}{\delta x}  \right) \notag \\
&- \dfrac{\delta t}{\delta x}  \left (  A^{up} \left( \w^n_{i+1} ,\b^n_i,\b^n_{i+1} \right) - A^{up} \left( \w^n_i ,\b^n_{i-1},\b^n_i \right) \right ) \, . \label{Num-11} 
\end{align}
We recognize in the first terms of the right hand side of \eqref{Num-11} the exact discrete expression of \eqref{Num-8}. Hence the numerical scheme satisfied by $\b$ is
\begin{equation}
\b^{n+1}_i =\b^{n}_i - \dfrac{\delta t}{\delta x}  \left ( A^{up} \left( \w^n_{i+1} ,\b^n_i,\b^n_{i+1} \right) - A^{up} \left( \w^n_i ,\b^n_{i-1},\b^n_i \right) \right ) \, , \notag 
\end{equation}
which is an upwind discretization of the advection problem: $\partial_t \b + \nabla \cdot (\b\,  \w) = 0$. Thanks to the positivity of the upwind scheme under the C.F.L. condition \eqref{Num-7p} we deduce the discrete maximum principle for $\rs$.
\end{proof}
\subsection{Numerical results}
In this section we present several numerical simulations performed for  the migration model~\eqref{Num-1}-\eqref{Num-3} with the numerical scheme~\eqref{Num-7}-\eqref{Num-9} introduced above. First, in order to validate the numerical method, two 1D test cases for which the exact solution is known are presented. Then 2D simulations are shown for two types of situations: first the case where the desired velocity $\V$ is known, and then the case where $\V$ is given as a function of $\rs$.
\subsubsection{1D simulations}
In this section the domain is the interval $(0, 1)$, the desired velocity is $\V=1$, and we consider the following boundary conditions for $p$:
\begin{equation}
p^n(x=0)=0 \, , \quad \partial_x p^n(x=1)= \rs^n(1) \, \V^n(1) \, . \label{1Dim-1a}
\end{equation}
Two different initial conditions are considered:
\begin{equation}
\rs^0=\dfrac{1}{2} \mathbf{1}_{[0.1 \, ,\,  0.9]} + \mathbf{1}_{[0.9 \, ,\,  1]} \quad \label{1Dim-1}
\hbox{ (see Fig.~\ref{fig:1Dim-1}a) }, 
\end{equation}
and
\begin{equation}
\rs^0=\mathbf{1}_{[0.3 \, ,\,  0.5]} \quad \label{1Dim-2}
\hbox{(see Fig.~\ref{fig:1Dim-2}a).}
\end{equation}
\begin{figure}
\begin{center}
\begin{tabular}{ll}
\textbf{a)}  & \textbf{b)}  \\
  \includegraphics[width=6.4cm]{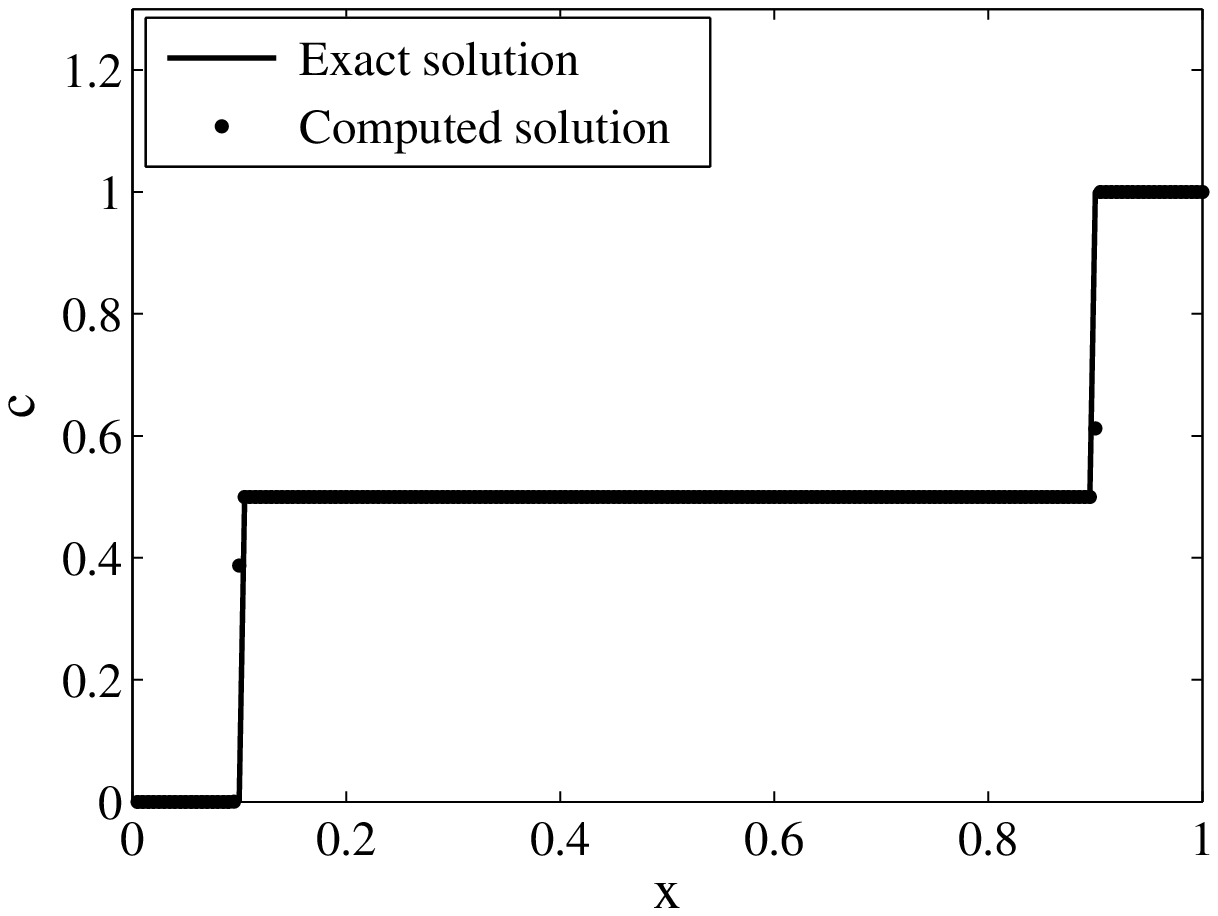}  & \includegraphics[width=6.4cm]{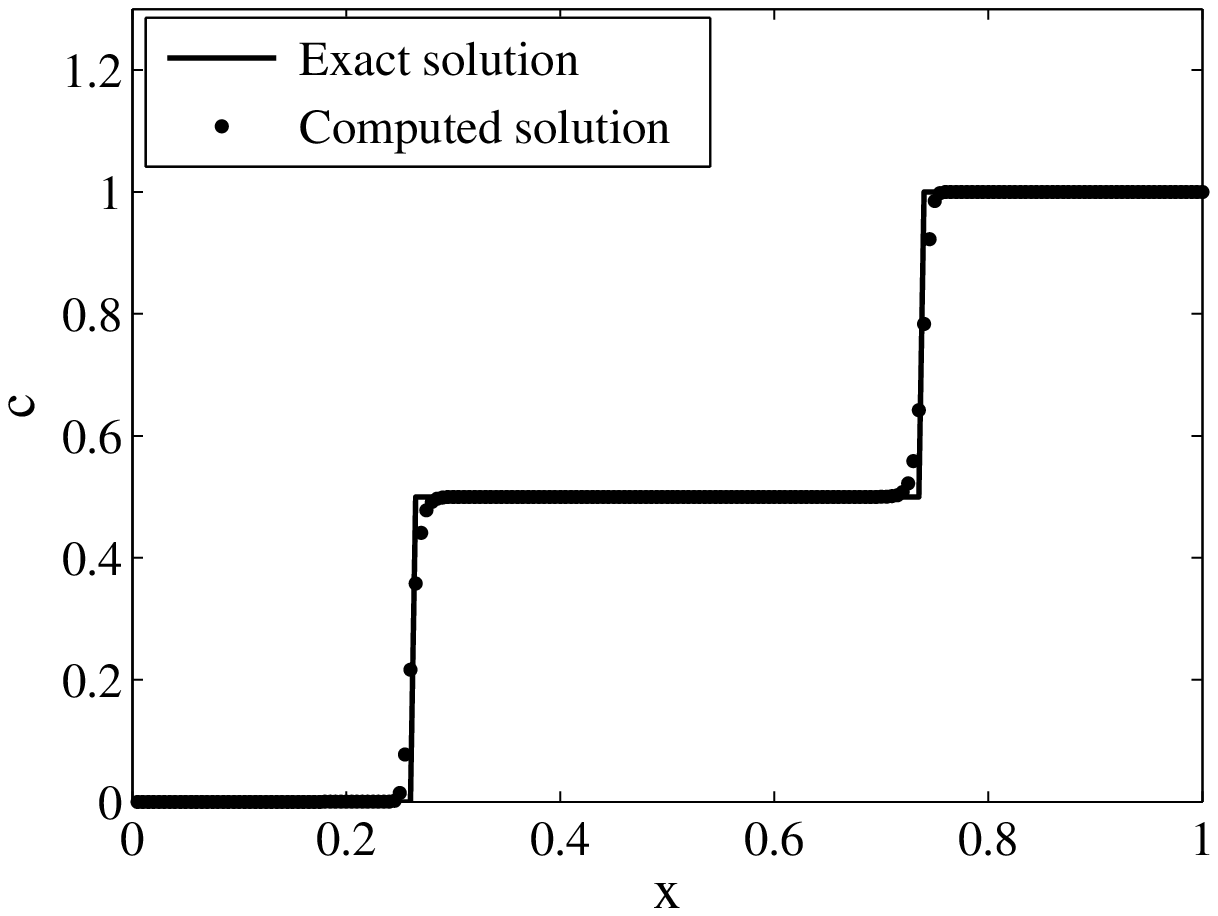} \\
  \textbf{c)}  & \textbf{d)}  \\
  \includegraphics[width=6.4cm]{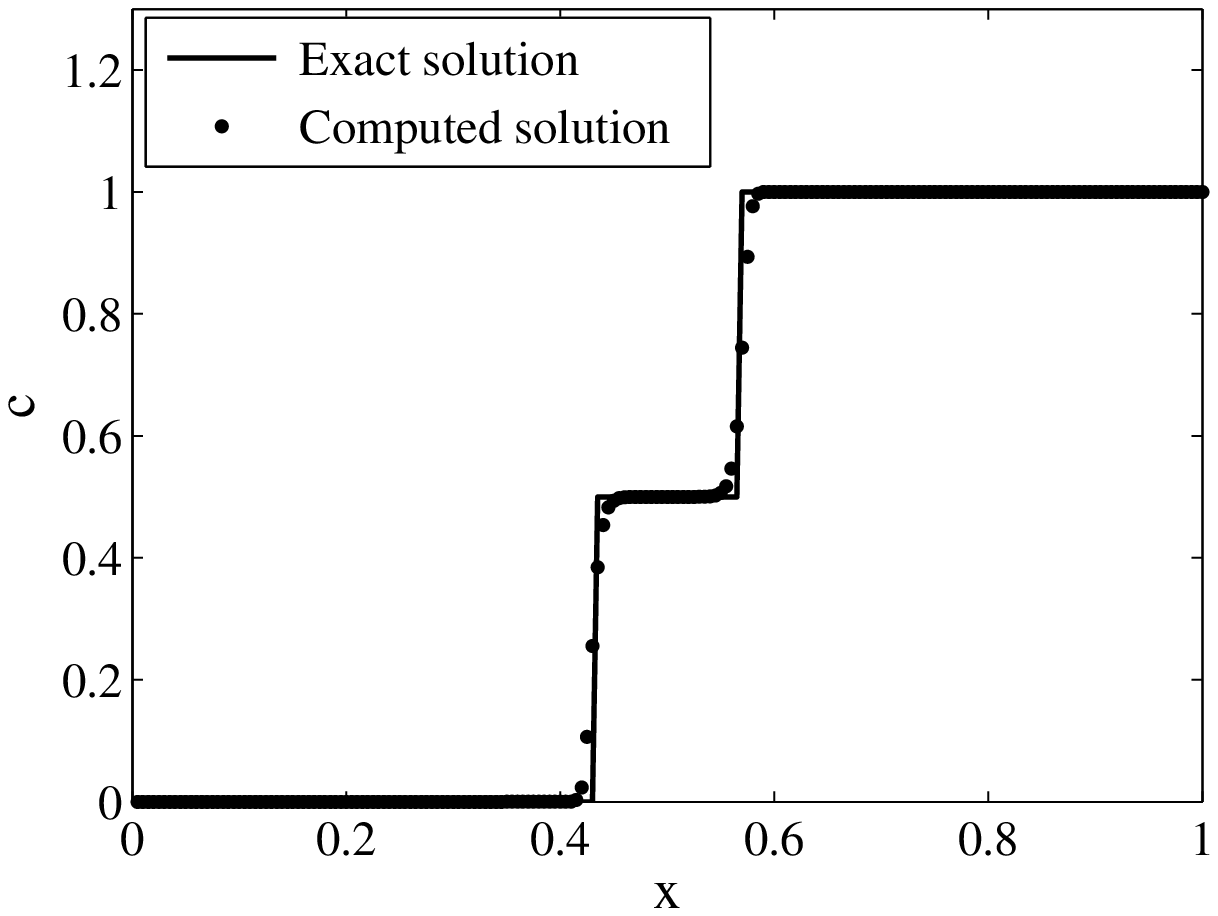}  & \includegraphics[width=6.4cm]{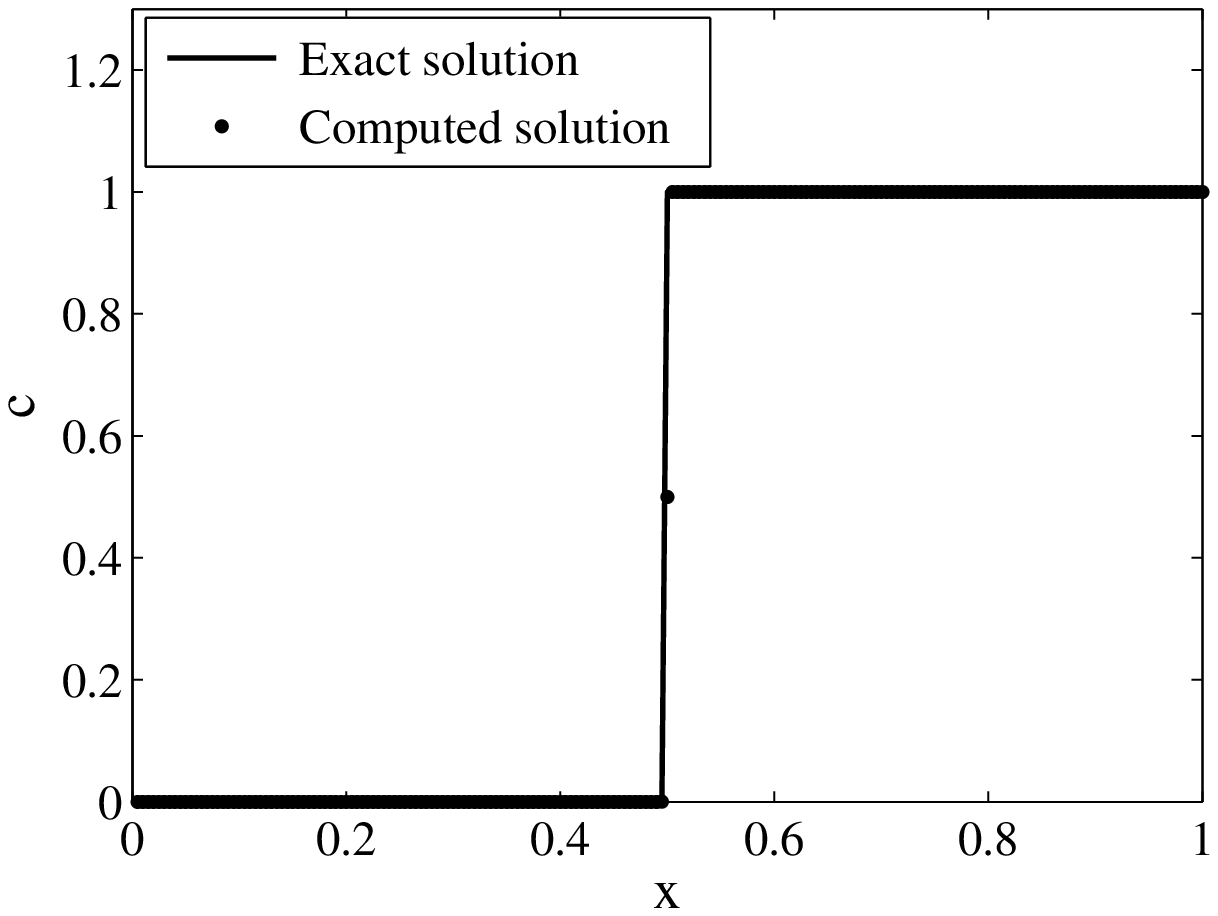} \\
\end{tabular}
\caption{Evolution of the concentration $\rs$ for the initial data defined in~\eqref{1Dim-1}, with the ``wall'' boundary condition~\eqref{1Dim-1a}. Exact solution (plain) and  discrete solution (dotted line). Simulations performed for $N_x = 200$.}\label{fig:1Dim-1}
\end{center}
\end{figure}

As we can see on Figures~\ref{fig:1Dim-1}b-c and  \ref{fig:1Dim-2}b-c, errors between the computed and the exact solution happen mostly near the points of discontinuity of the exact solution, except when we reach steady state. 
 Discontinuities of steady states are indeed captured with striking accuracy (see Figs~\ref{fig:1Dim-1}d and \ref{fig:1Dim-2}d), whereas upwinding might be expected to diffuse fronts: it is due to the behavior of the model we consider, which tends to sweep  $\rho$ toward the discontinuity, inducing permanent correction of the numerical diffusion.


\begin{figure}
\begin{center}
\begin{tabular}{ll}
\textbf{a)}  & \textbf{b)}  \\
  \includegraphics[width=6.4cm]{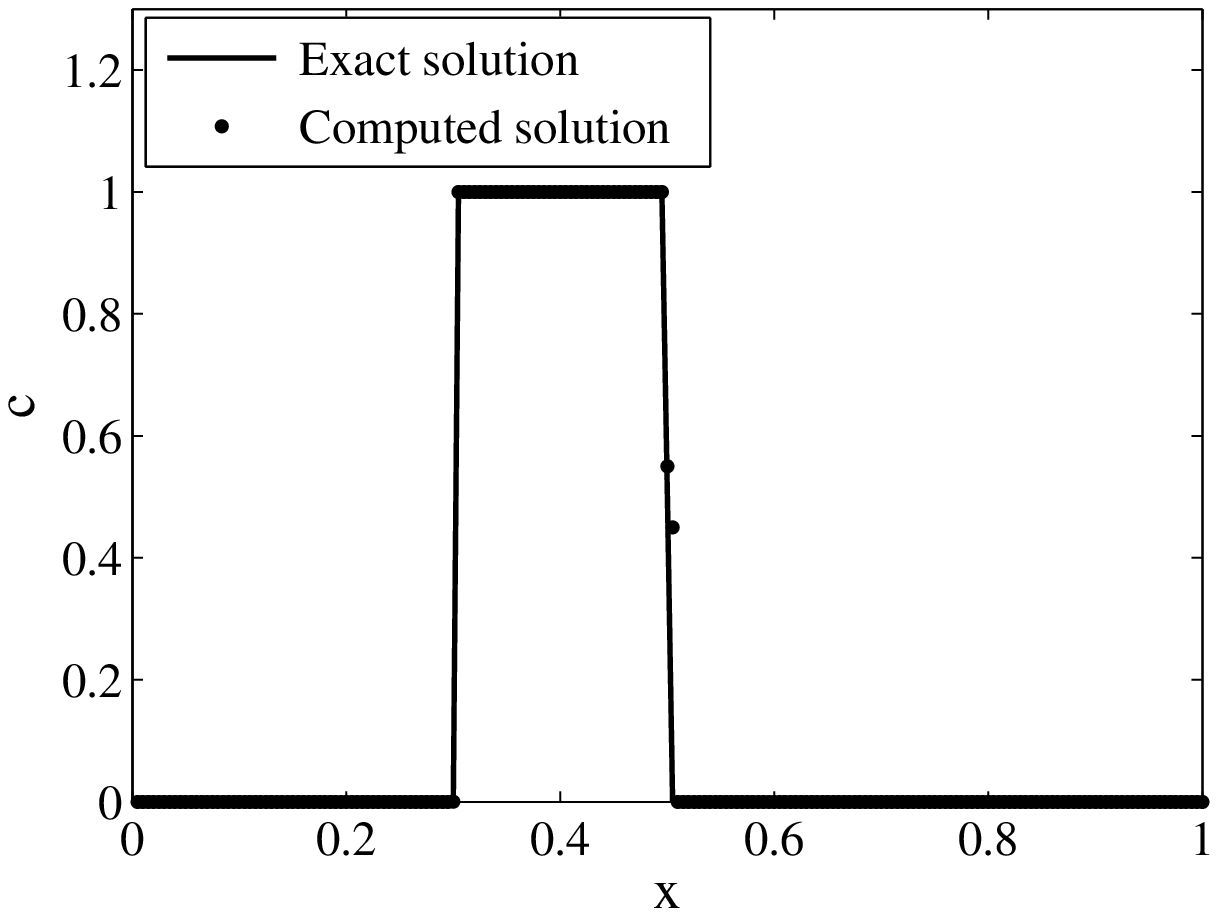}  & \includegraphics[width=6.4cm]{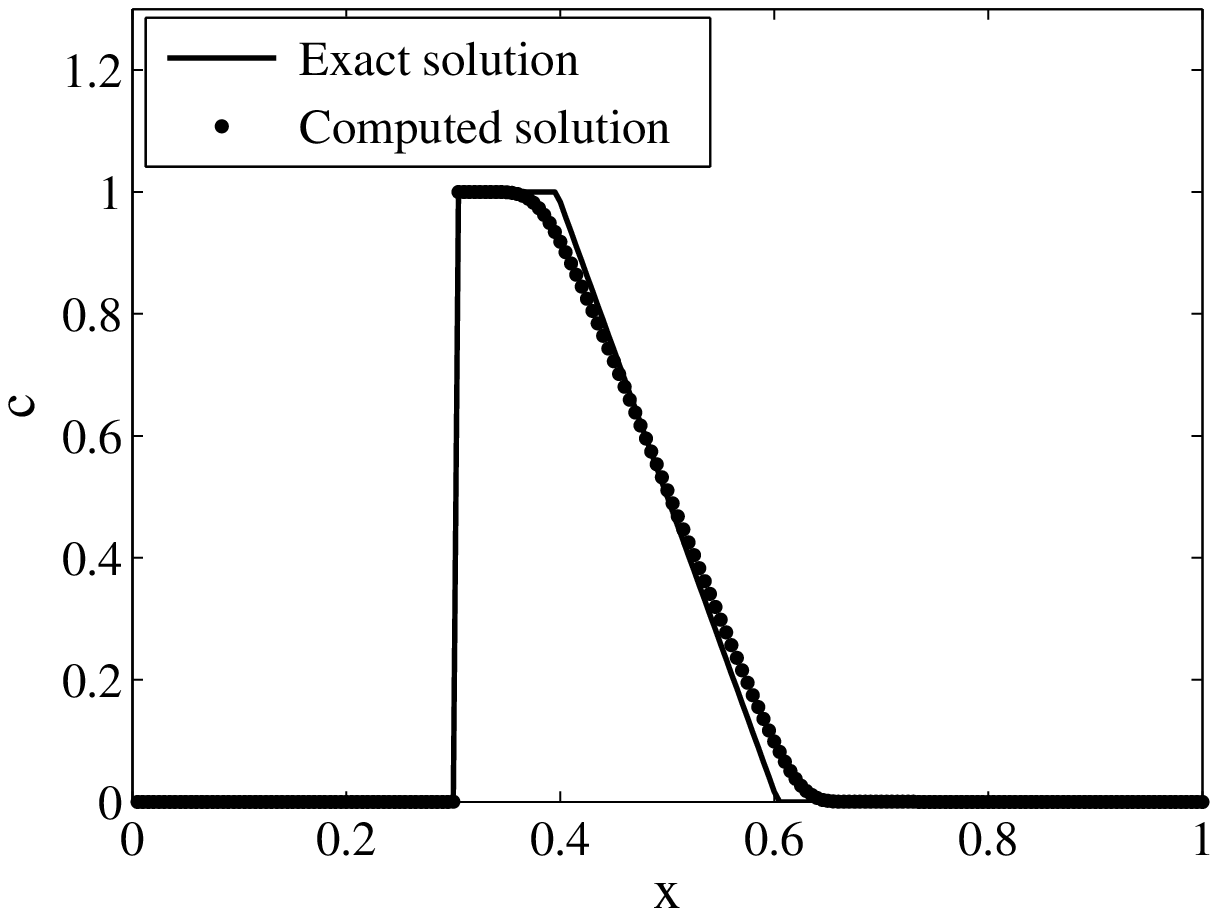} \\
  \textbf{c)}  & \textbf{d)}  \\
  \includegraphics[width=6.4cm]{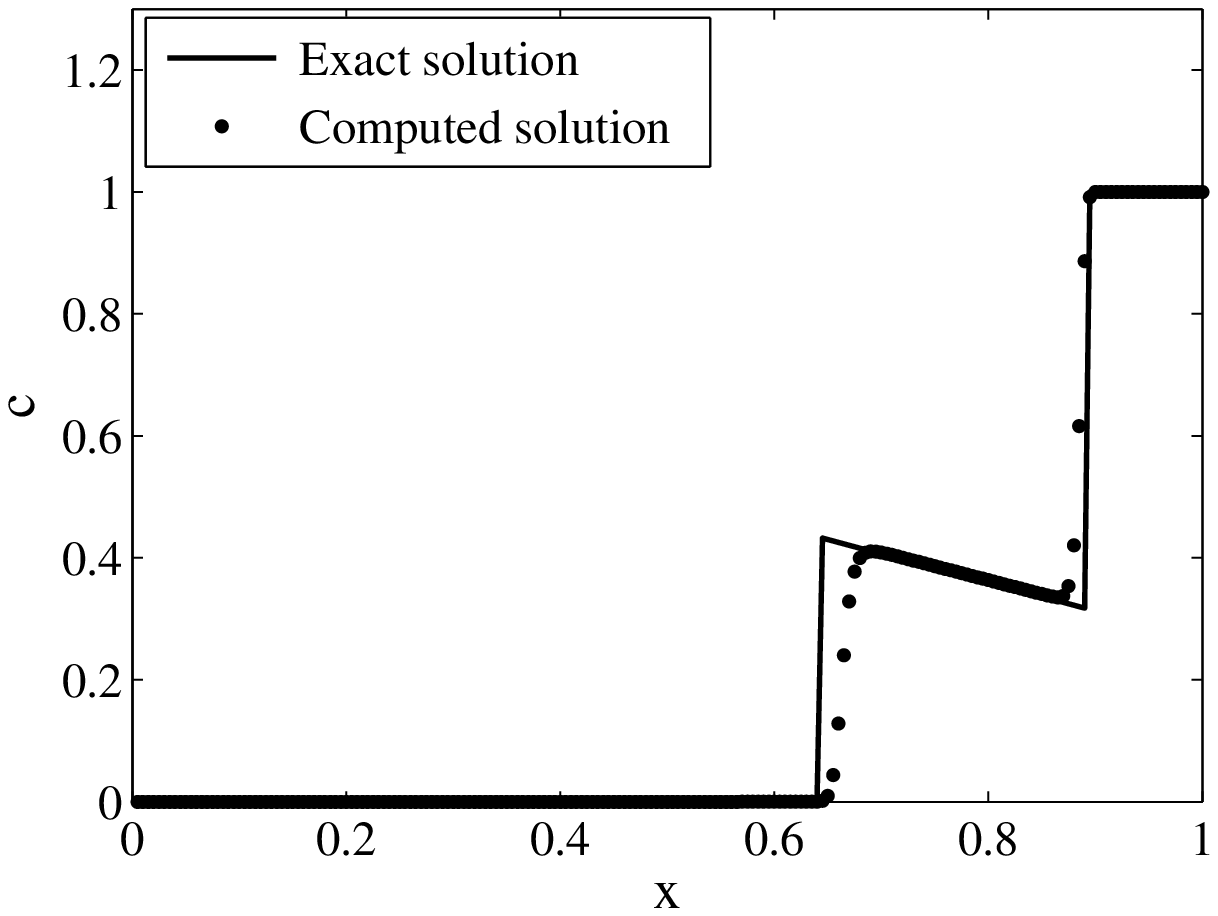}  & \includegraphics[width=6.4cm]{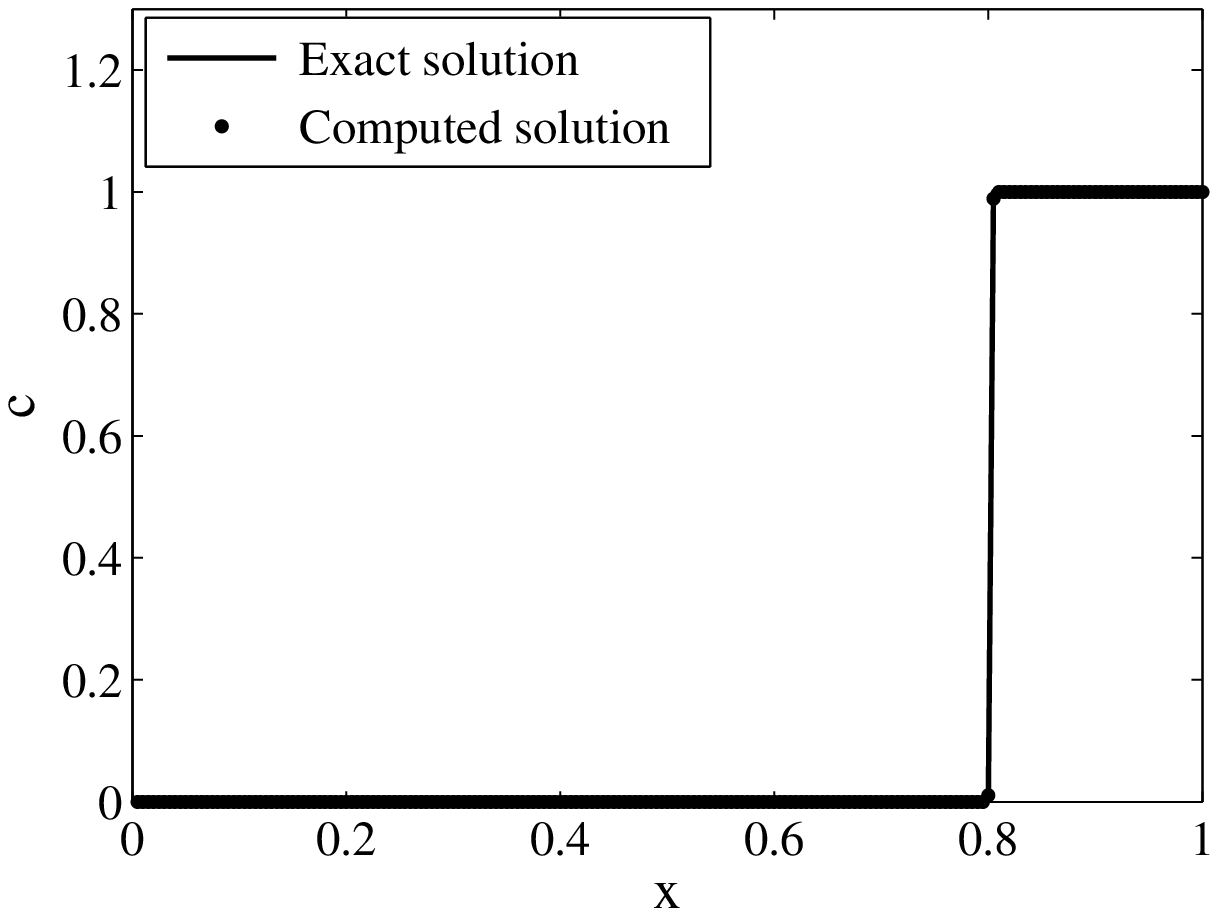} \\
\end{tabular}
\caption{Evolution of the concentration $\rs$ for the initial data defined in~\eqref{1Dim-2}, with the ``wall'' boundary condition~\eqref{1Dim-1a}. Exact solution (plain) and discrete solution (dotted line).
Simulations performed for $N_x = 200$.}
\label{fig:1Dim-2}
\end{center}
\end{figure}

\subsubsection{2D simulations for a constant desired velocity $\V$}

In this section we perform some numerical simulations for the migration model~\eqref{Num-1}-\eqref{Num-3}, in the  case  where the desired velocity $\V$ is given and constant in time. In all the cases shown here, the domain $\Omega$ is bounded and surrounded by walls, so that $p$ verifies Neumann boundary conditions:
 \begin{equation}
\frac{\partial p }{\partial n }= \rs \, \V \cdot\nn . \label{2Dim-1}
 \end{equation} 
The first situation studied here is a direct 2D extension of the 1D test case described previously, see Fig.~\ref{fig:1Dim-2}. The domain is the unit square 
$
 \Omega=(0\, , \, 1)\times (0 \, , \, 1) $.
 We also prescribe the following desired velocity: $\V =1$. Figure~\ref{fig:Mig-1} shows the evolution of $\rs$ from the initial condition (Fig.~\ref{fig:Mig-1}a) to the steady state (Fig.~\ref{fig:Mig-1}f) given by a numerical simulation performed in the situation described above. The same way as in the 1D situation (Fig.~\ref{fig:1Dim-2}b), the evolution of $\rs$ begins with the spreading of a mixing zone along the $x$ axis (Figs. \ref{fig:Mig-1}b-c). Along with the evolution of the mixing zone, the solution also tends to spread along the $y$ axis. Hence, a real difference is shown here between our model and Burgers-type models. When the mixing zone reaches the wall, a congested zone instantly appears and develops backwards (Fig.~\ref{fig:Mig-1}d-e). The steady state is reached when all the right side of the domain is saturated with the active species (Fig.~\ref{fig:Mig-1}f). \par
\begin{figure}
\begin{center}
\begin{tabular}{ll}
\textbf{a)}  & \textbf{b)}  \\
  \includegraphics[width=6.4cm]{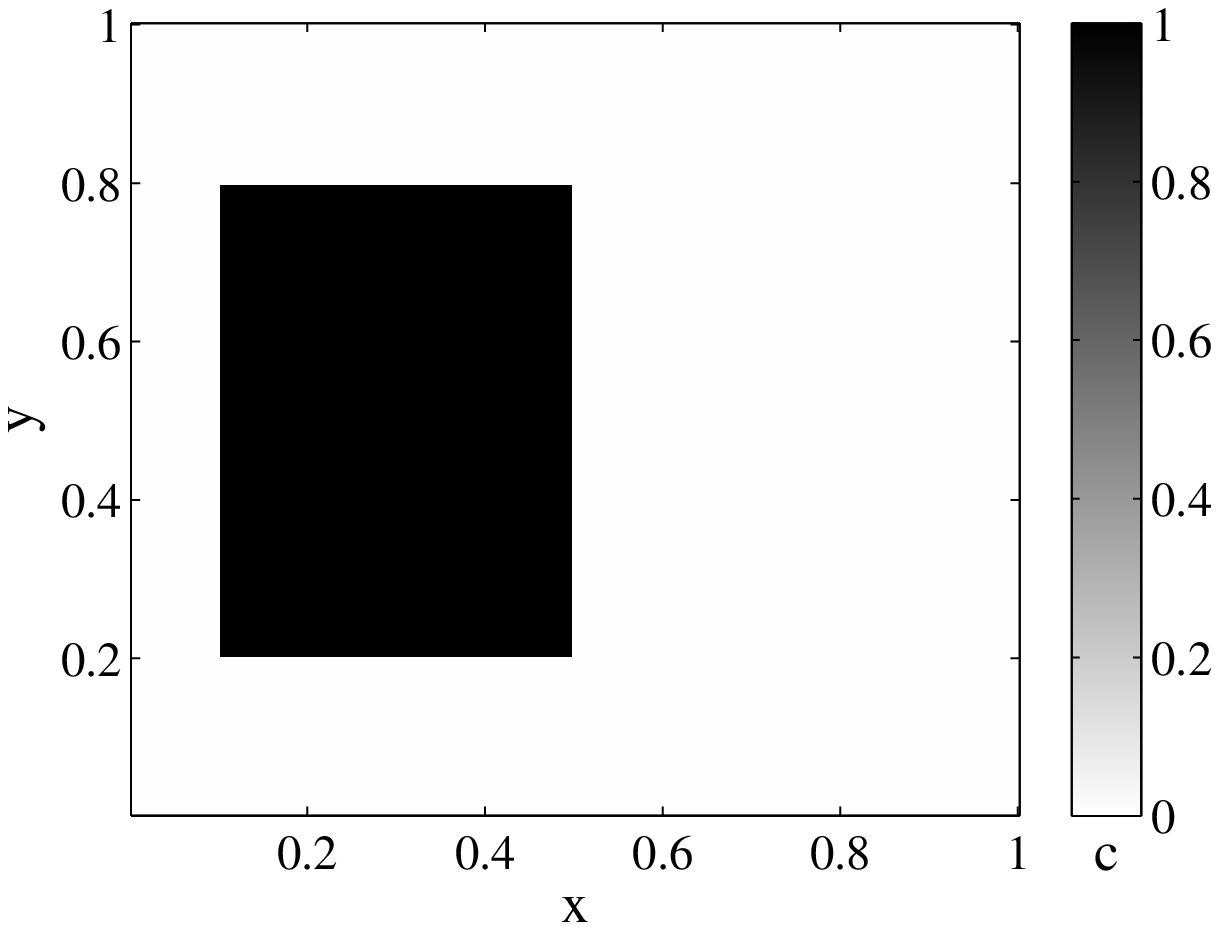}  & \includegraphics[width=6.4cm]{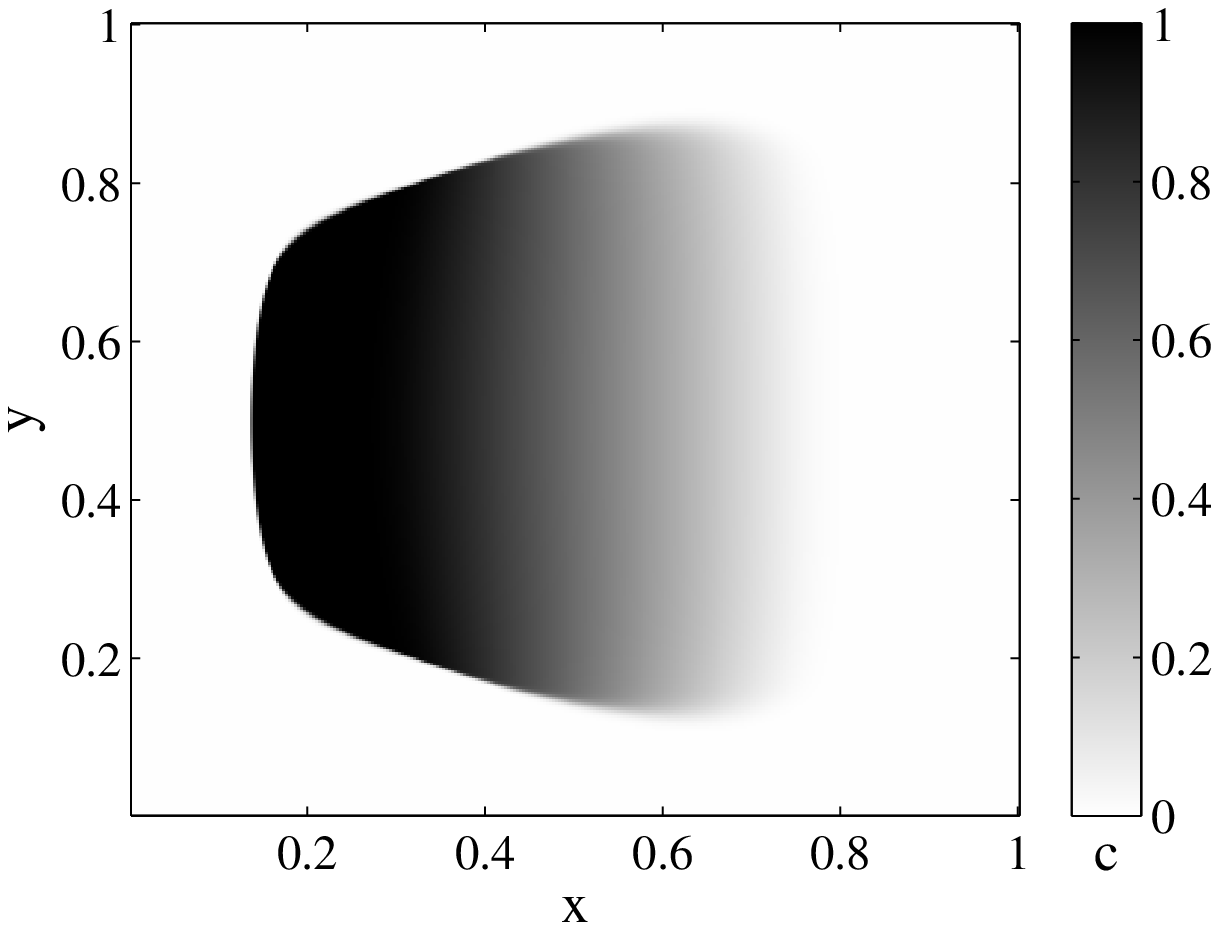} \\
 \textbf{c)}  & \textbf{d)}  \\
  \includegraphics[width=6.4cm]{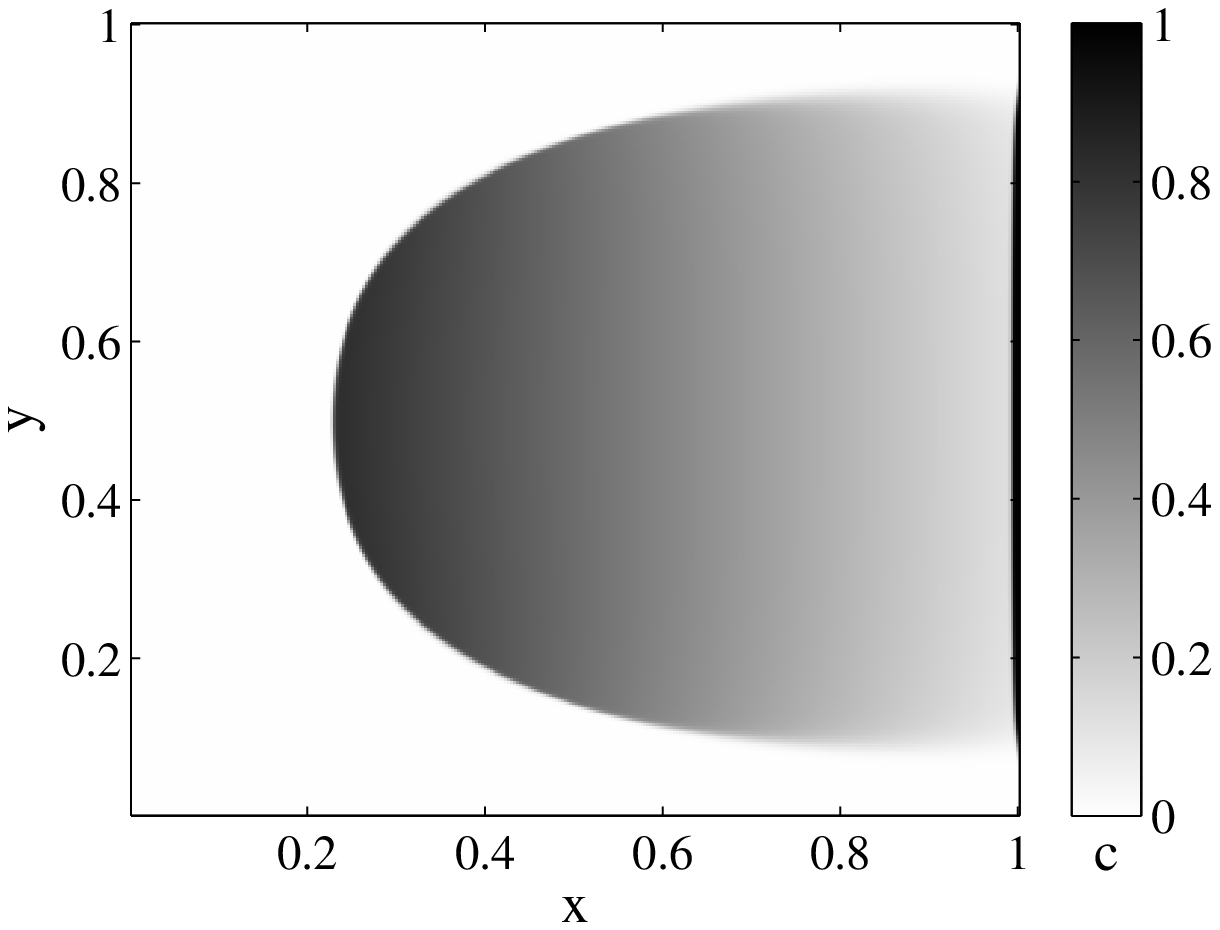}  & \includegraphics[width=6.4cm]{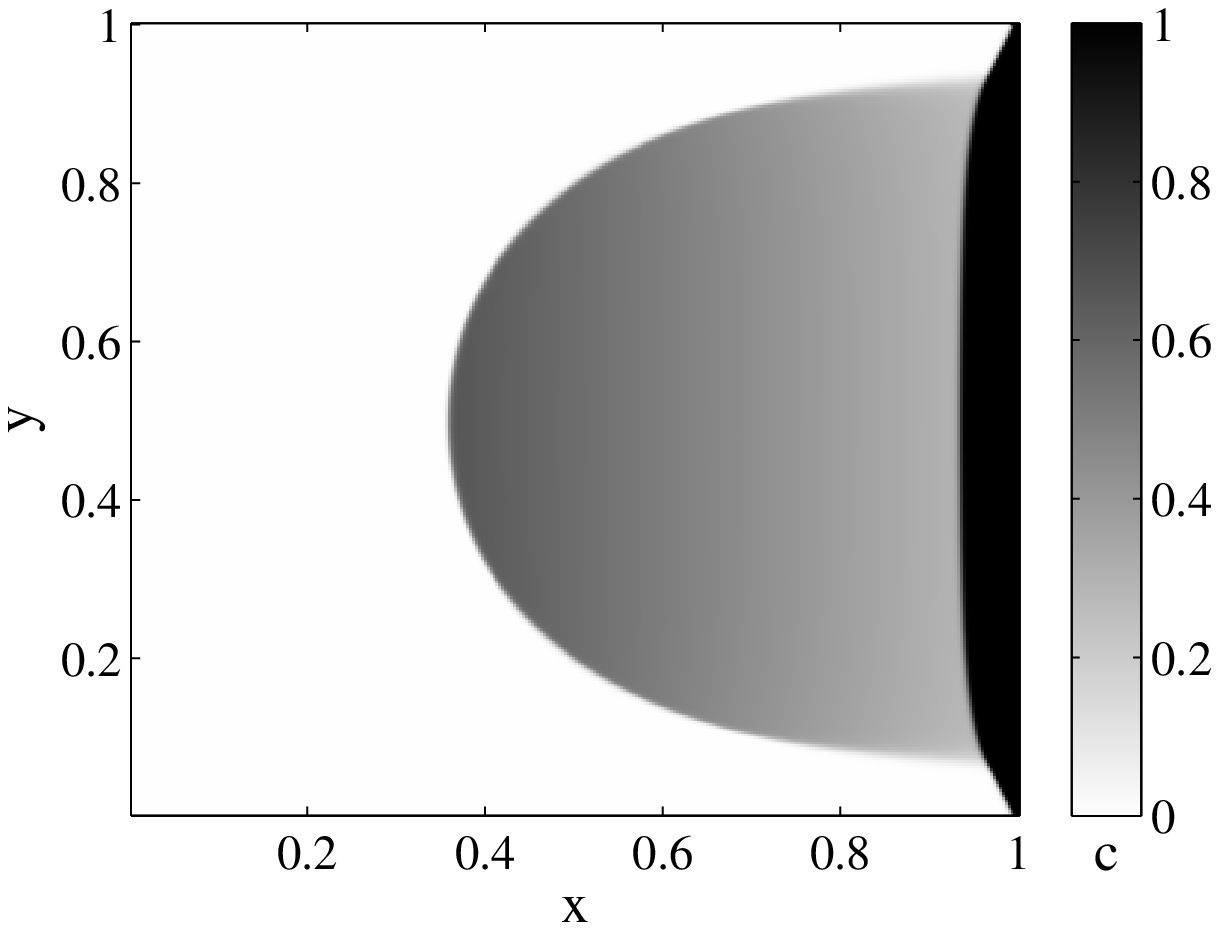} \\
  \textbf{e)}  & \textbf{f)}  \\
  \includegraphics[width=6.4cm]{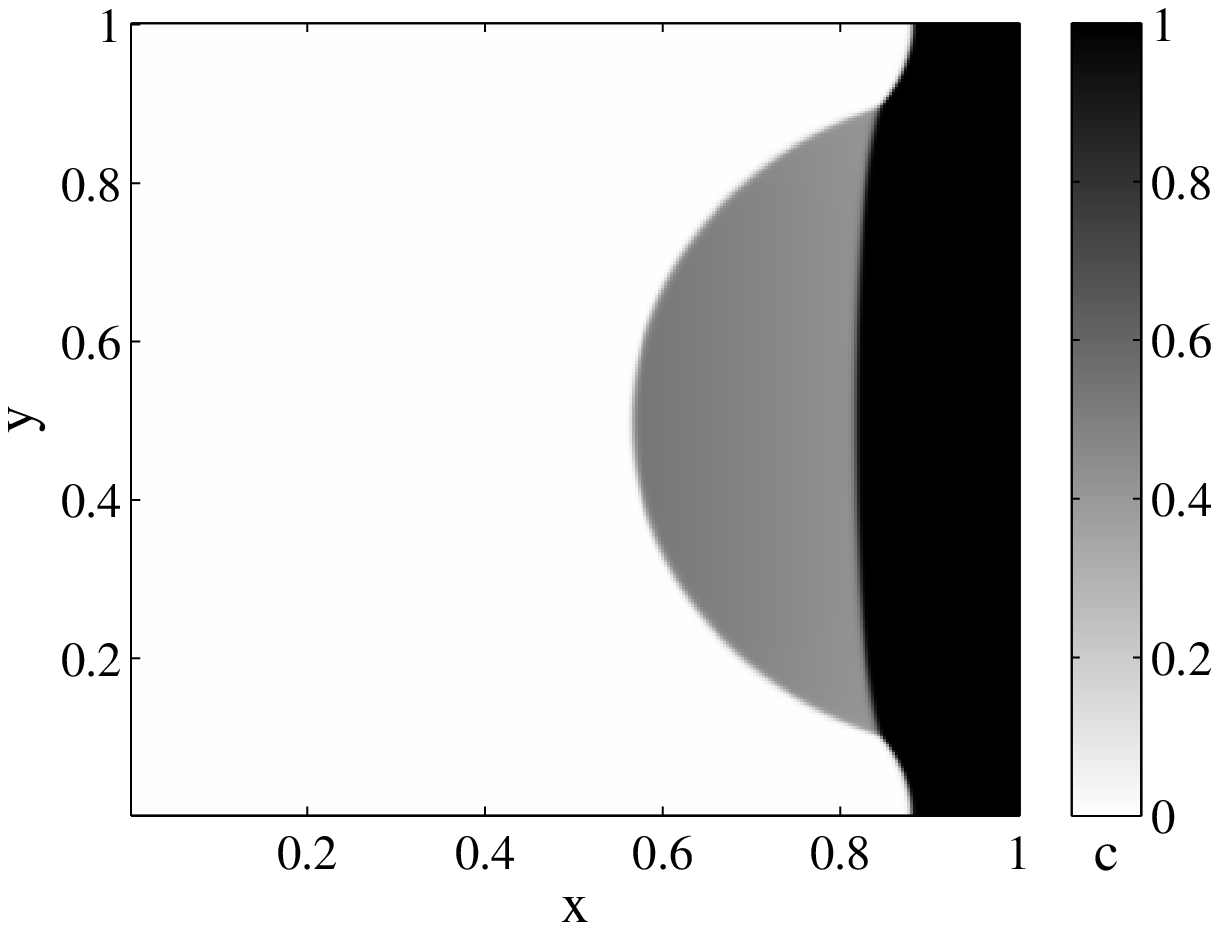}  & \includegraphics[width=6.4cm]{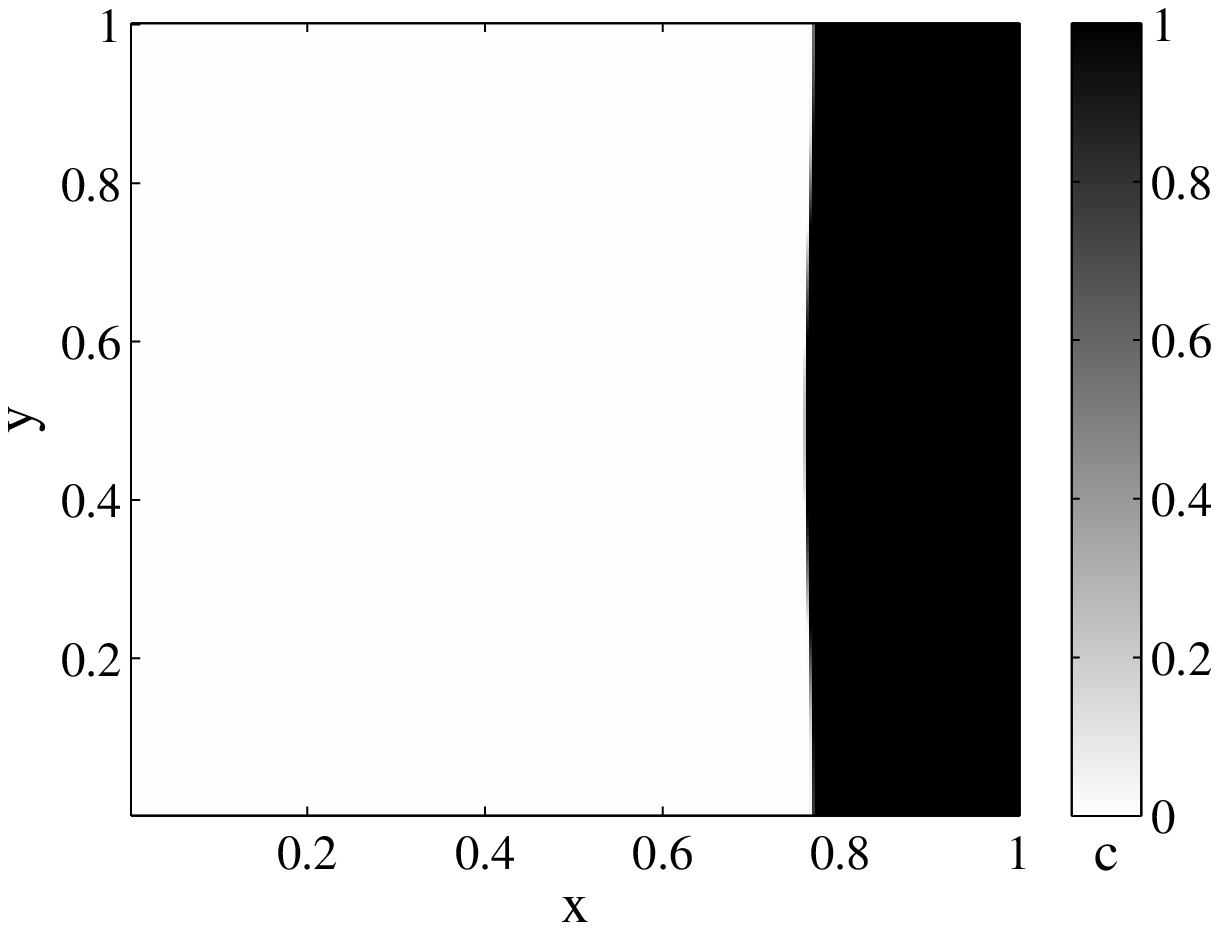} \\
\end{tabular}
\caption{Evolution of the concentration $\rs$ in the case of a constant desired velocity $\V=1$, with the ``wall'' boundary condition~\eqref{2Dim-1}. Mesh resolution: $N_x \times N_y = 300 \times 300$.}\label{fig:Mig-1}
\end{center}
\end{figure}
The second situation studied in this section involves a more complex geometry for the domain, which is defined by
\begin{equation}
\Omega = (0\, , \, 1)\times (0 \, , \, 1) \backslash \left(  [0.3\, , \, 0.7]\times [0 \, , \, 0.45] \cup [0.3\, , \, 0.7]\times [0.55 \, , \, 1] \right)  \, , 
\end{equation}
which corresponds to two rectangular rooms joined by a thin corridor (see Fig.~\ref{fig:Mig-2}). The desired velocity is set to $\V=- \nabla D$, where $D(x,y)$ represents the geodesic distance from $(x,y)$ to the right wall. In practice $D$ is computed thanks to the toolbox provided by~\cite{Peyre-2008}, which is based on the Fast-Marching method (see~\cite{KS}).
\begin{figure}
\begin{center}
\includegraphics[width=6.4cm]{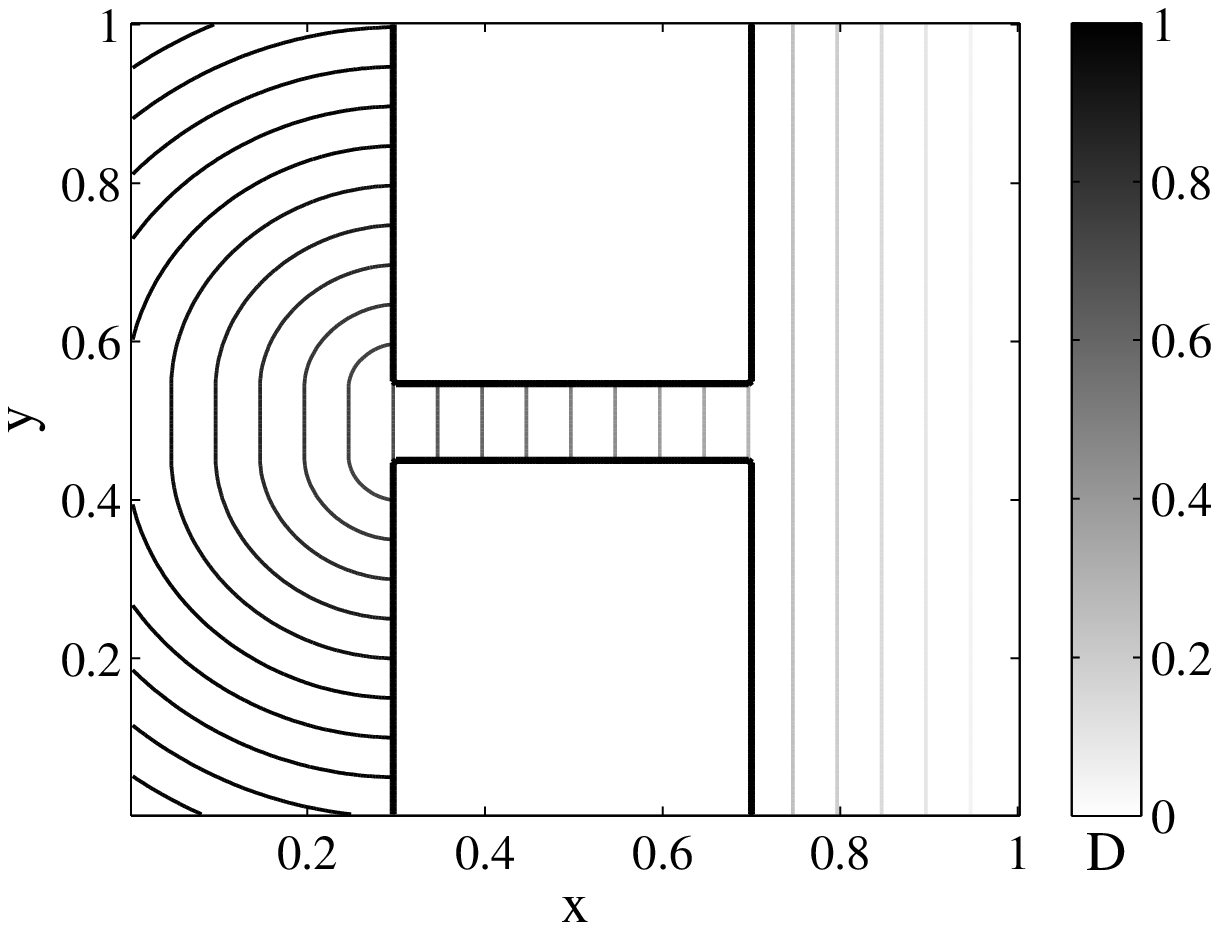}
\caption{Geodesic distance $D(x,y)$ to the right wall taking into account the obstacle computed with a fast marching algorithm.
}\label{fig:Mig-2}
\end{center}
\end{figure}
Figure \ref{fig:Mig-3} represents the simulated evolution of the concentration $\rs$ with the setting described above. The initial condition is shown in figure \ref{fig:Mig-3}a. As we can see a congested zone rapidly forms near the entrance of the corridor (figure~\ref{fig:Mig-3}b), which size decreases in time as the matter runs through it (figures \ref{fig:Mig-3}c-e). We notice that a quasi-homogeneous flow regime tends to be established in the corridor (Fig.~\ref{fig:Mig-3}d-e), where a constant concentration $\rs\sim 0.5$ is observed. It illustrates conservation of species $2$ (in white in the figures): as the domain is bounded by walls, flow of $1$ to the right has to be balanced by an opposite flow of $2$ to the left. 
As we define the correction velocity  in the least-square sense, actual speeds  for $1$ and $2$ are close,  so that the mixture is necessarily balanced for global mass conservation reasons. 
 After some time, as observed in the previous test case, all the active species is concentrated on the right side of the domain (Fig.~\ref{fig:Mig-3}f).
\begin{figure}
\begin{center}
\begin{tabular}{ll}
\textbf{a)}  & \textbf{b)}  \\
  \includegraphics[width=6.4cm]{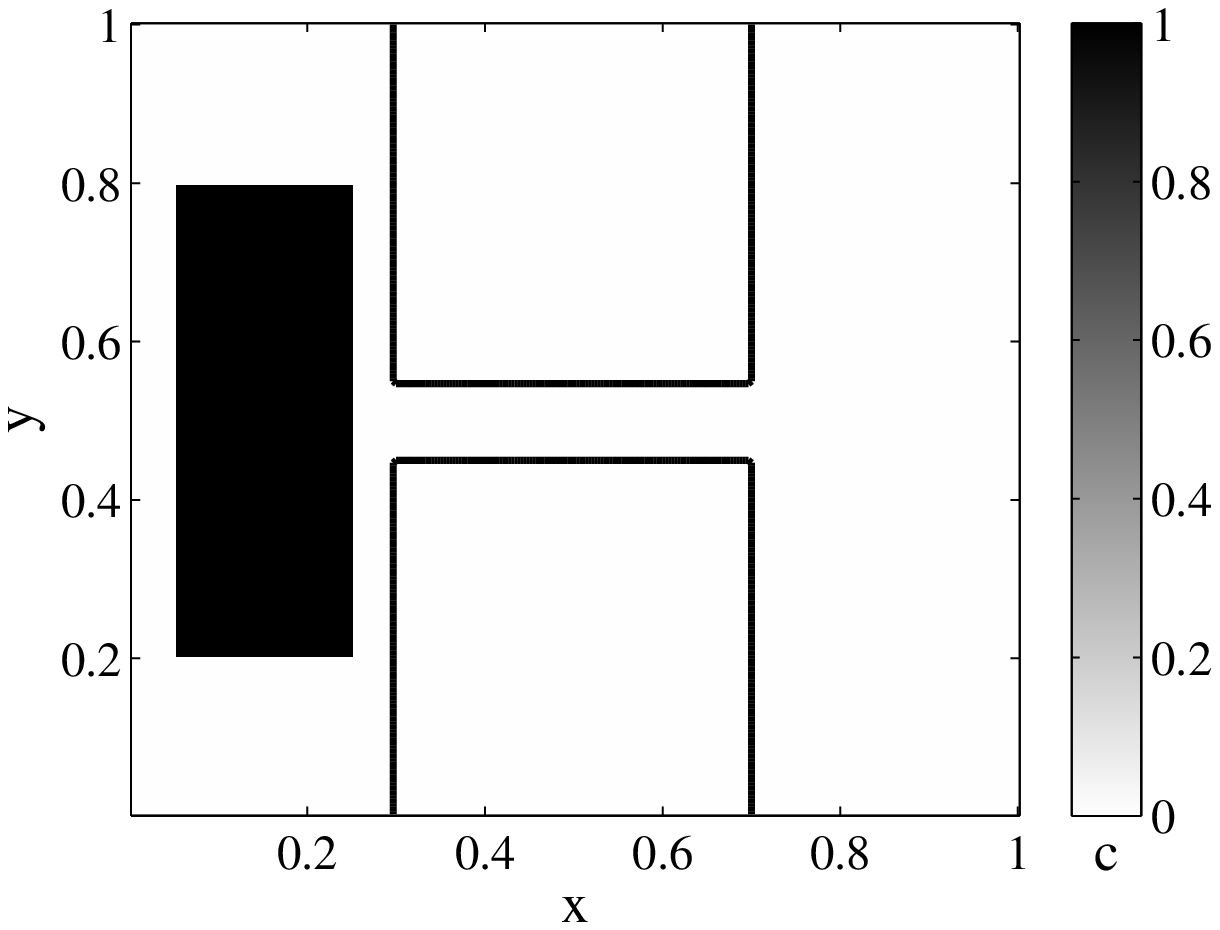}  & \includegraphics[width=6.4cm]{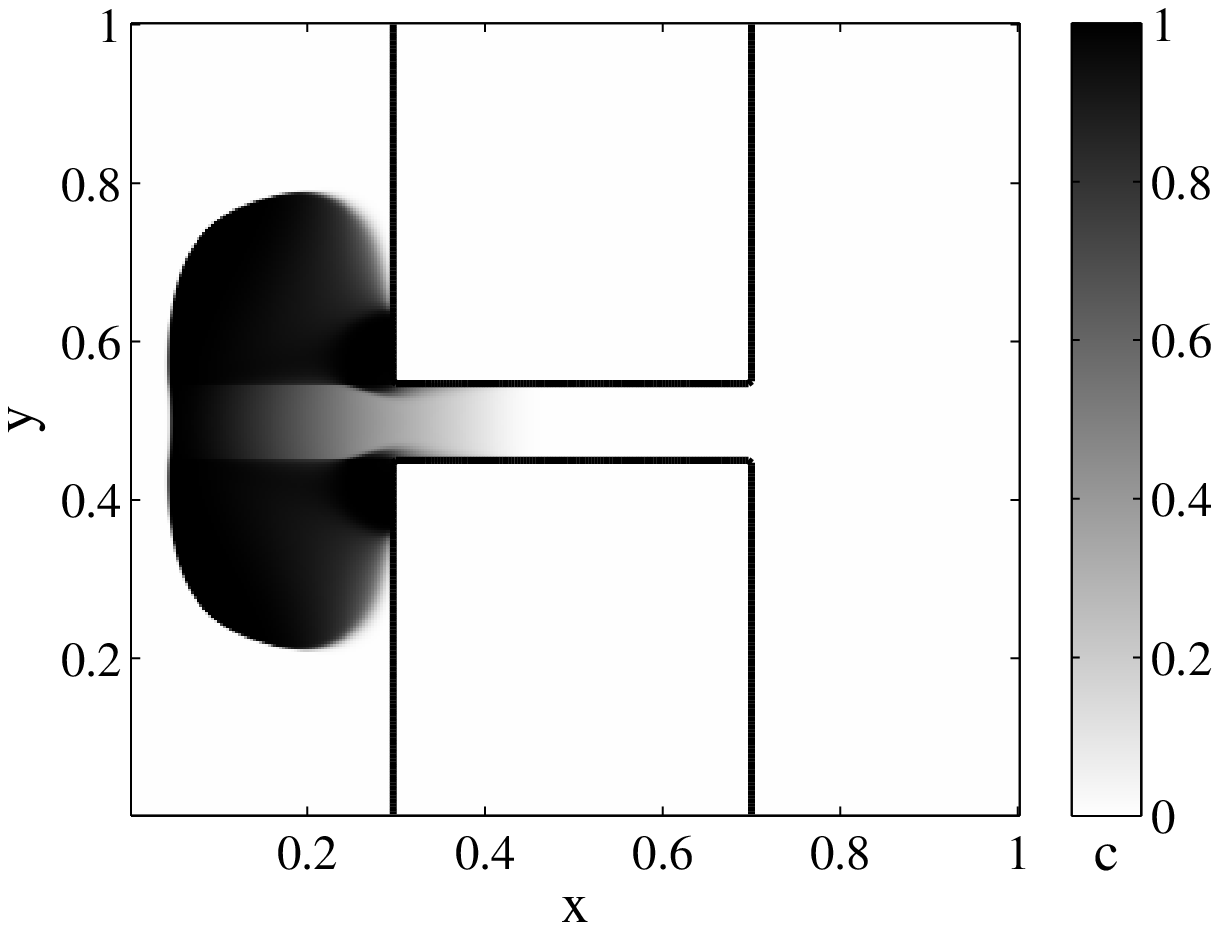} \\
 \textbf{c)}  & \textbf{d)}  \\
  \includegraphics[width=6.4cm]{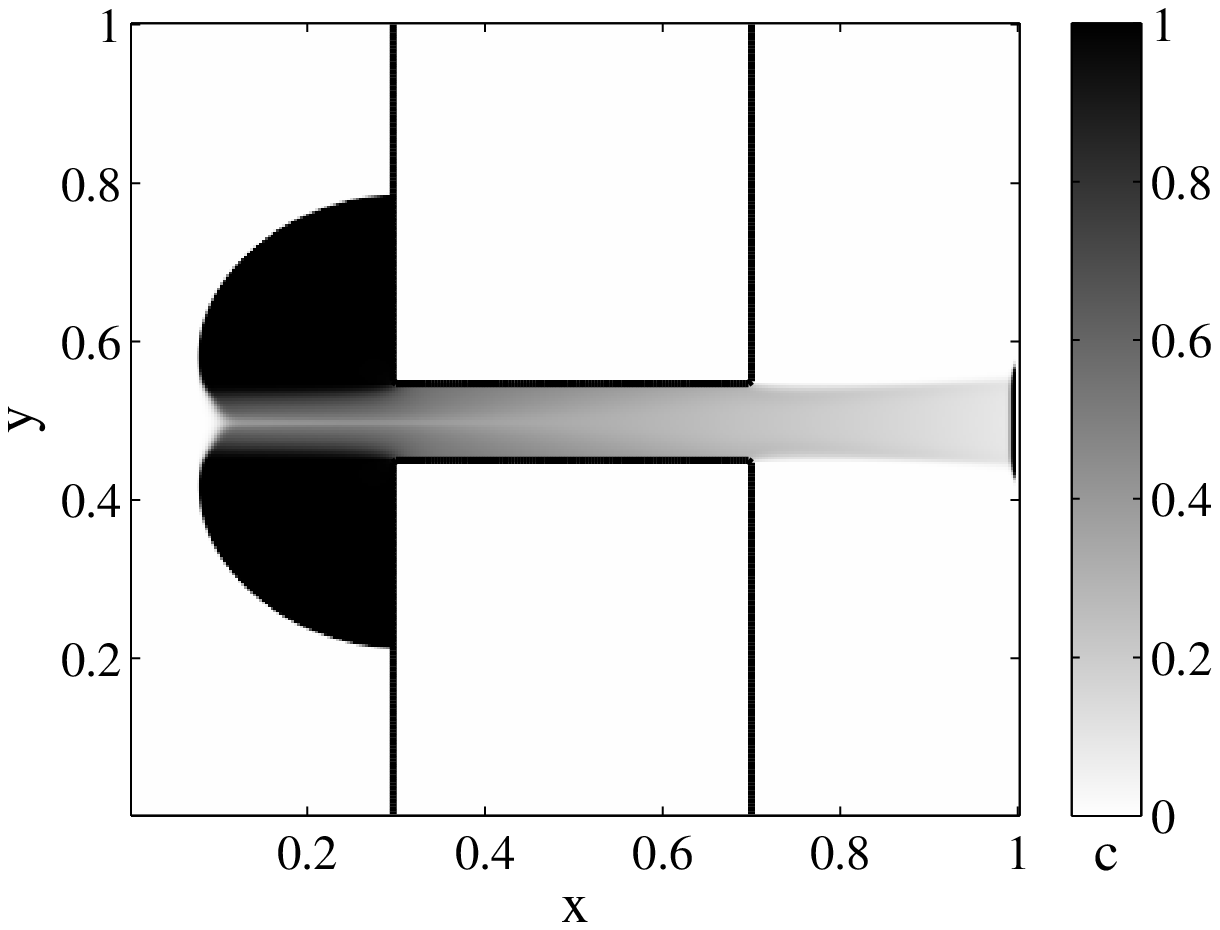}  & \includegraphics[width=6.4cm]{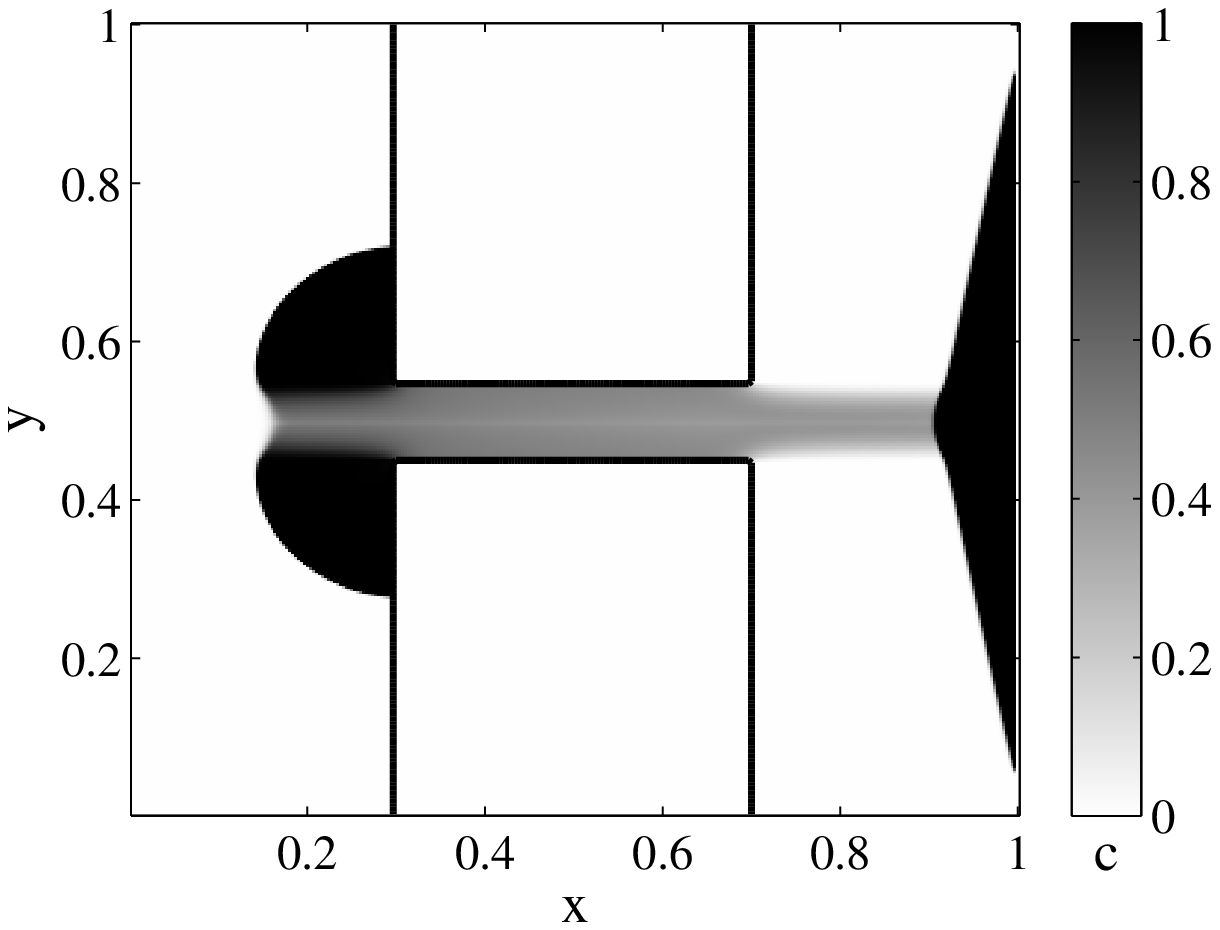} \\
  \textbf{e)}  & \textbf{f)}  \\
  \includegraphics[width=6.4cm]{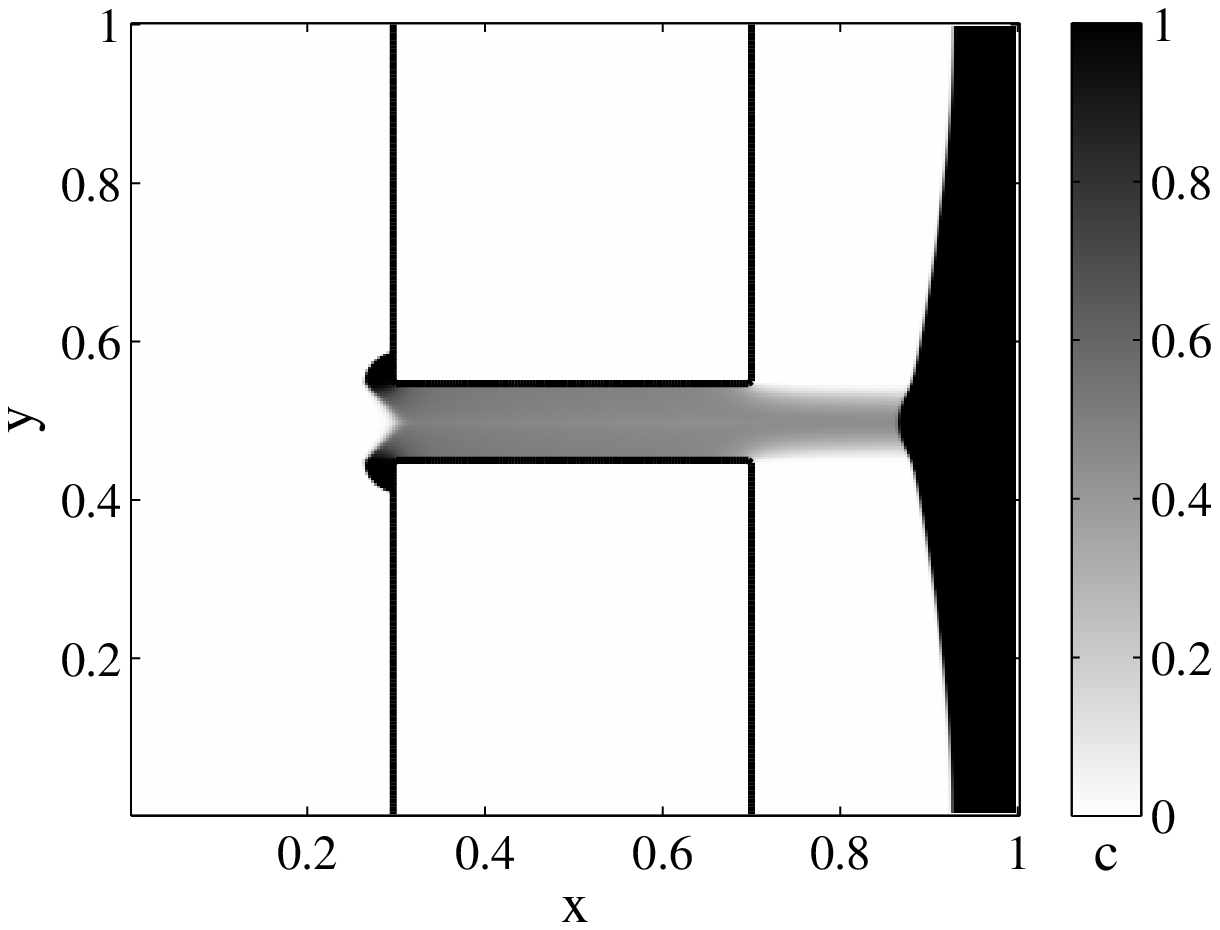}  & \includegraphics[width=6.4cm]{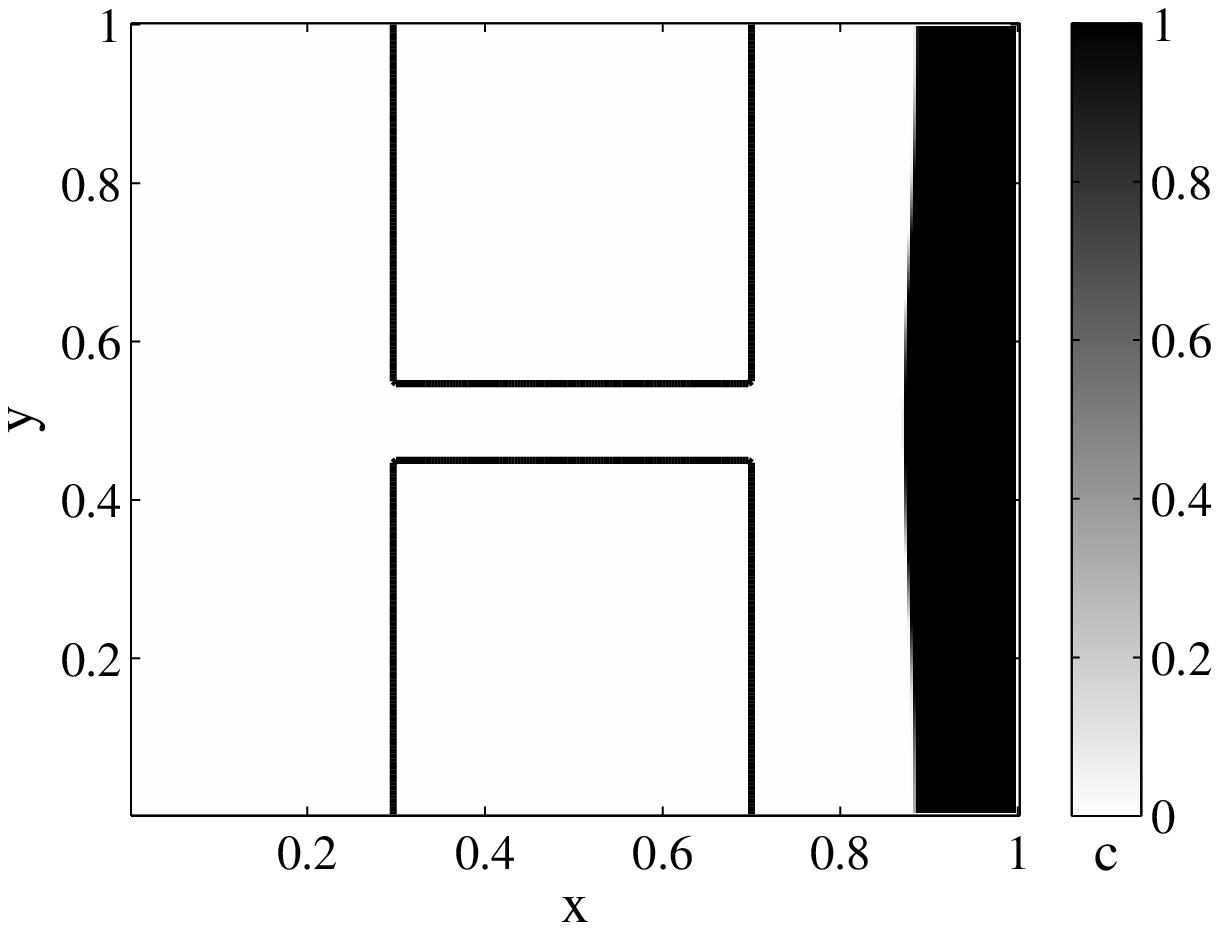} \\
\end{tabular}
\caption{Evolution of the concentration $\rs$ in the case of a constant desired velocity $\V=\nabla D$ (see Fig.~\ref{fig:Mig-2}), with the ``wall'' boundary condition~\eqref{2Dim-1}. Mesh resolution: $N_x \times N_y = 300 \times 300$.}\label{fig:Mig-3}
\end{center}
\end{figure}

\subsubsection{2D simulations for an evolving desired velocity $\V(\rs)$}
As mentionned in section \ref{sec:model}, the desired velocity can be chosen to depend on the local concentration $\rs$, the same way as in the Keller-Segel model (see \cite{KelSeg}):
\begin{align}
& \V= \nabla S \, ,  \label{KS-1} \\
& \Delta S = -\rs  \, .\label{KS-2} 
\end{align}
In this section we consider periodic boundary conditions over $(\rs,p,S)$:
\begin{align}
& \rs(t,0,y)=\rs(t,1,y)\,,  & \rs(t,x,0)=\rs(t,x,1)\, , \label{KS-2a} \\
 & p(t,0,y)=p(t,1,y)\,,  & p(t,x,0)=p(t,x,1)\, ,  \label{KS-2b} \\
& S(t,0,y)=S(t,1,y)\,,  & S(t,x,0)=S(t,x,1)\, .\label{KS-2c}
\end{align}
The numerical treatment we give to equations~\eqref{KS-1}-\eqref{KS-2} is similar to the numerical treatment of equations \eqref{Num-2}-\eqref{Num-3} described previsouly:
\begin{align}
& \V_{x, \, ij}^n =  \dfrac{S^n_{ij}-S^n_{i-1,j}}{\delta x} \, ,   \label{KS-3}  \\
& \V_{y, \, ij}^n =  \dfrac{S^n_{ij}-S^n_{i,j-1}}{\delta y} \, ,   \label{KS-4}  \\
&\dfrac{S^n_{i+1,j}-2 \,S^n_{ij} + S^n_{i-1,j} }{\delta x^2}+\dfrac{S^n_{i,j+1}-2 \,S^n_{ij} + S^n_{i,j-1} }{\delta y^2} = - \rs^n_{ij} \, .   \label{KS-5} 
\end{align}
For all the following numerical simulations the initial data is set randomly  according to Bernoulli's law
\begin{align}
&\mathcal{P}\left(\rs^0_{ij} = 0 \right )  = 1-q \, , \\
&\mathcal{P}\left ( \rs^0_{ij} = 1 \right)  = q \, ,
\end{align}
where $q\in (0,1)$  conditions the initial mass of active cells. The domain is the unit square, the boundary conditions set here are periodic boundary conditions for $\rs$, $p$ and $S$. Figures~\ref{fig:KS-1} and \ref{fig:KS-2} represent the evolution of $\rs$ given by numerical simulations upon our model with the desired velocity defined by~\eqref{KS-1}-\eqref{KS-2}, and for initial conditions where $q=0.1$ and $q=0.5$ (respectively represented in Figures~\ref{fig:KS-1}a and \ref{fig:KS-2}a). In each case we see aggregation of bigger and bigger structures as time goes (Figs.~\ref{fig:KS-1}b-e and \ref{fig:KS-2}b-e). Two different steady states are shown, both involving a steady, fully congested zone. In the case $q=0.1$ the steady state congested zone is a disc, whereas in the case $q=0.5$ a band has formed because of the periodic boundary conditions we chose to consider.

\begin{figure}
\begin{center}
\begin{tabular}{ll}
\textbf{a)}  & \textbf{b)}  \\
  \includegraphics[width=6.4cm]{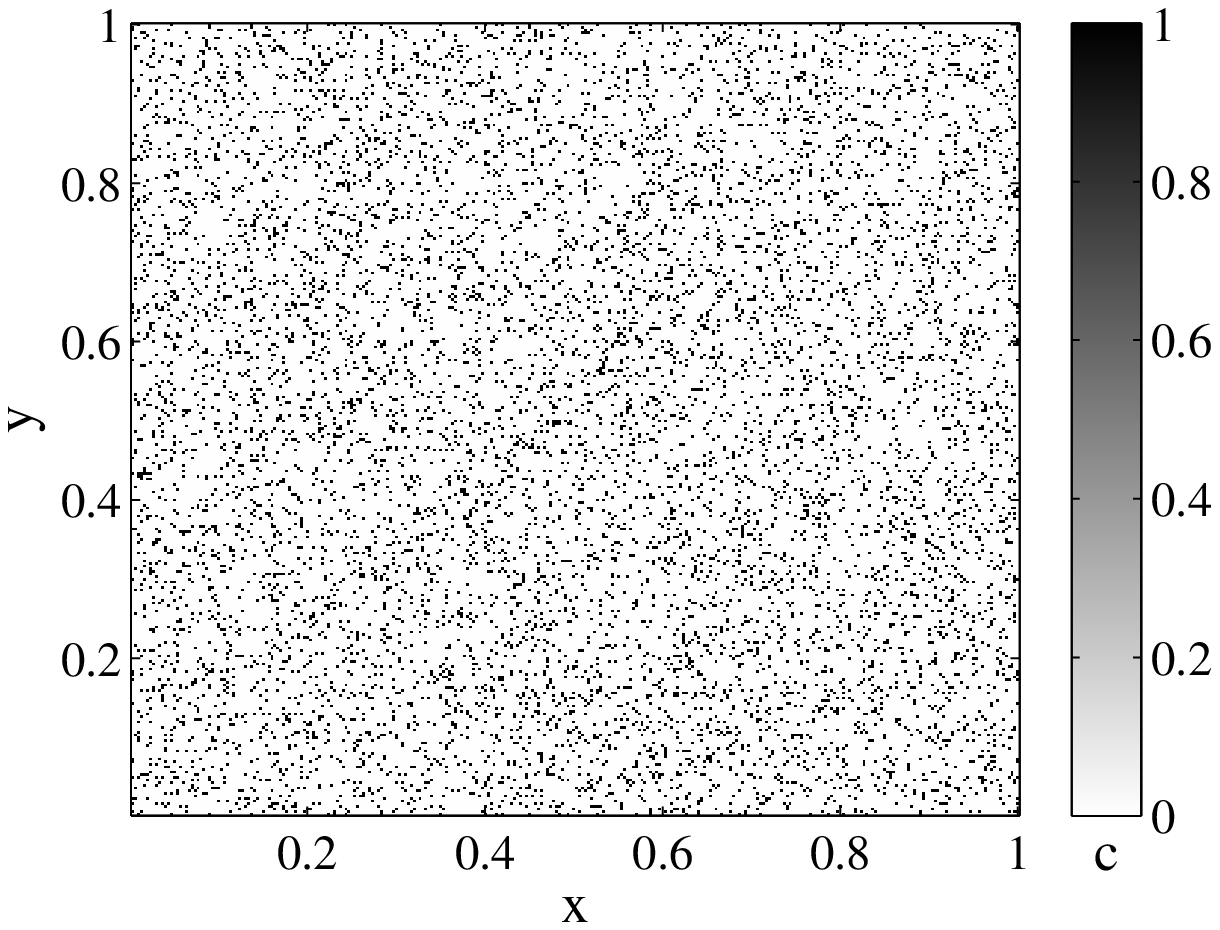}  & \includegraphics[width=6.4cm]{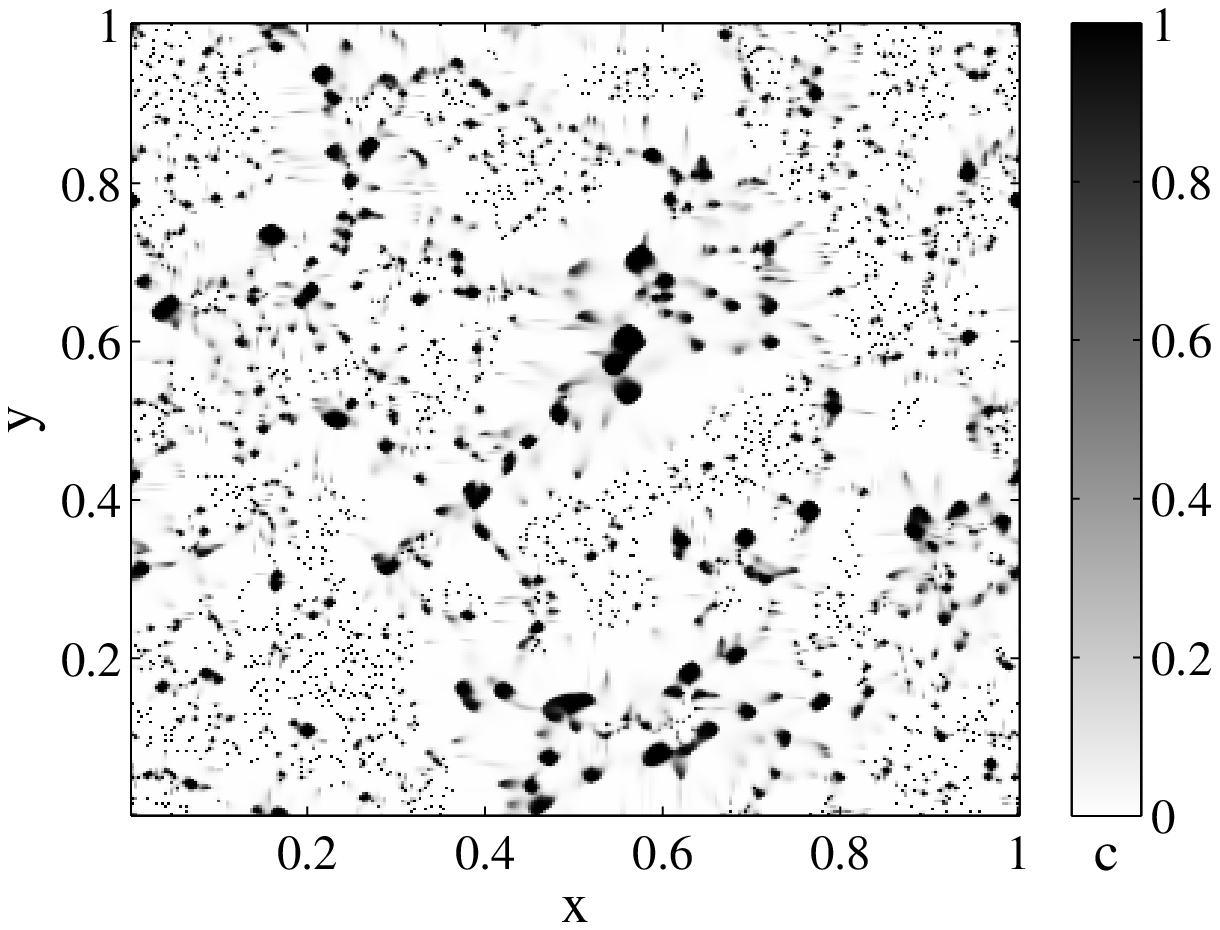} \\
 \textbf{c)} &  \textbf{d)}  \\
 \includegraphics[width=6.4cm]{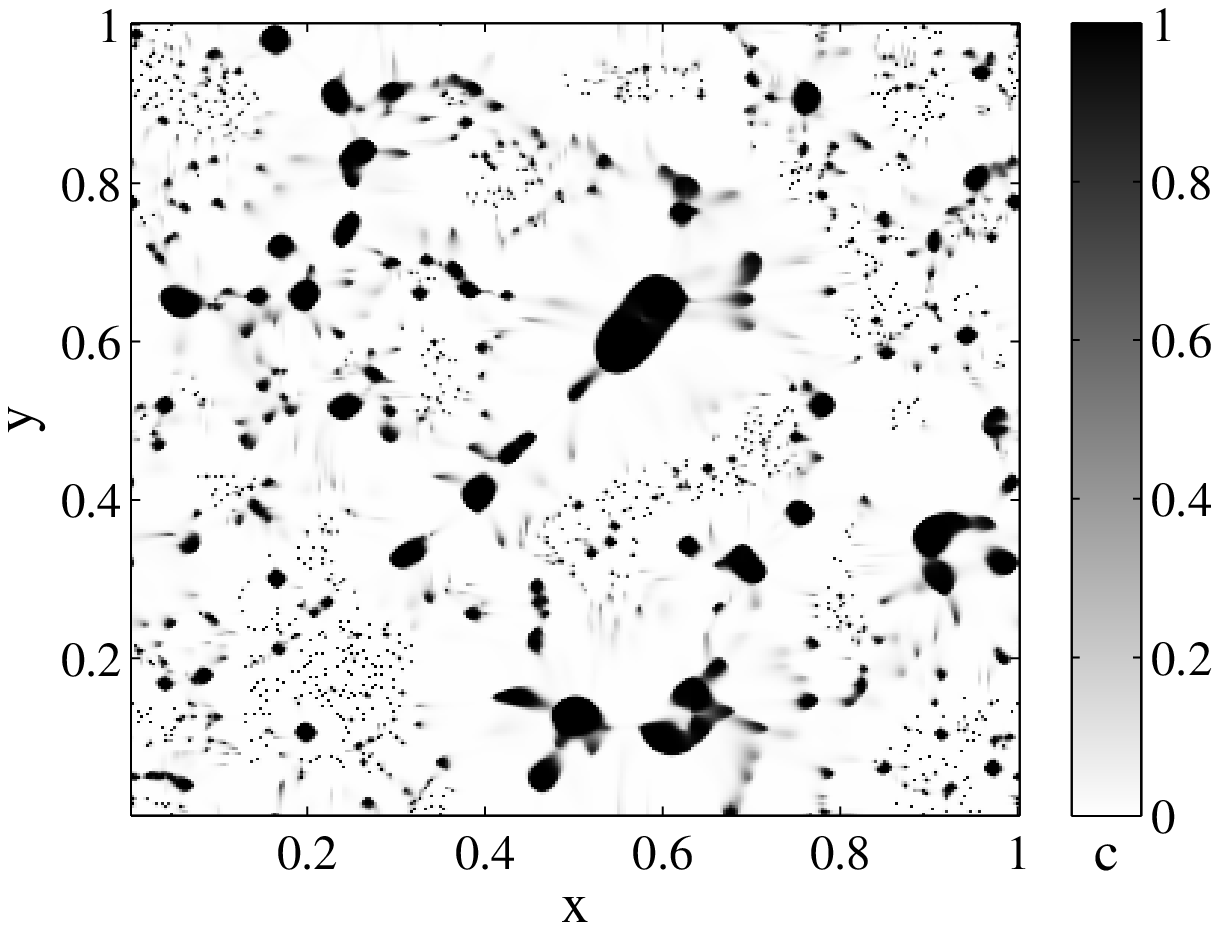}  & \includegraphics[width=6.4cm]{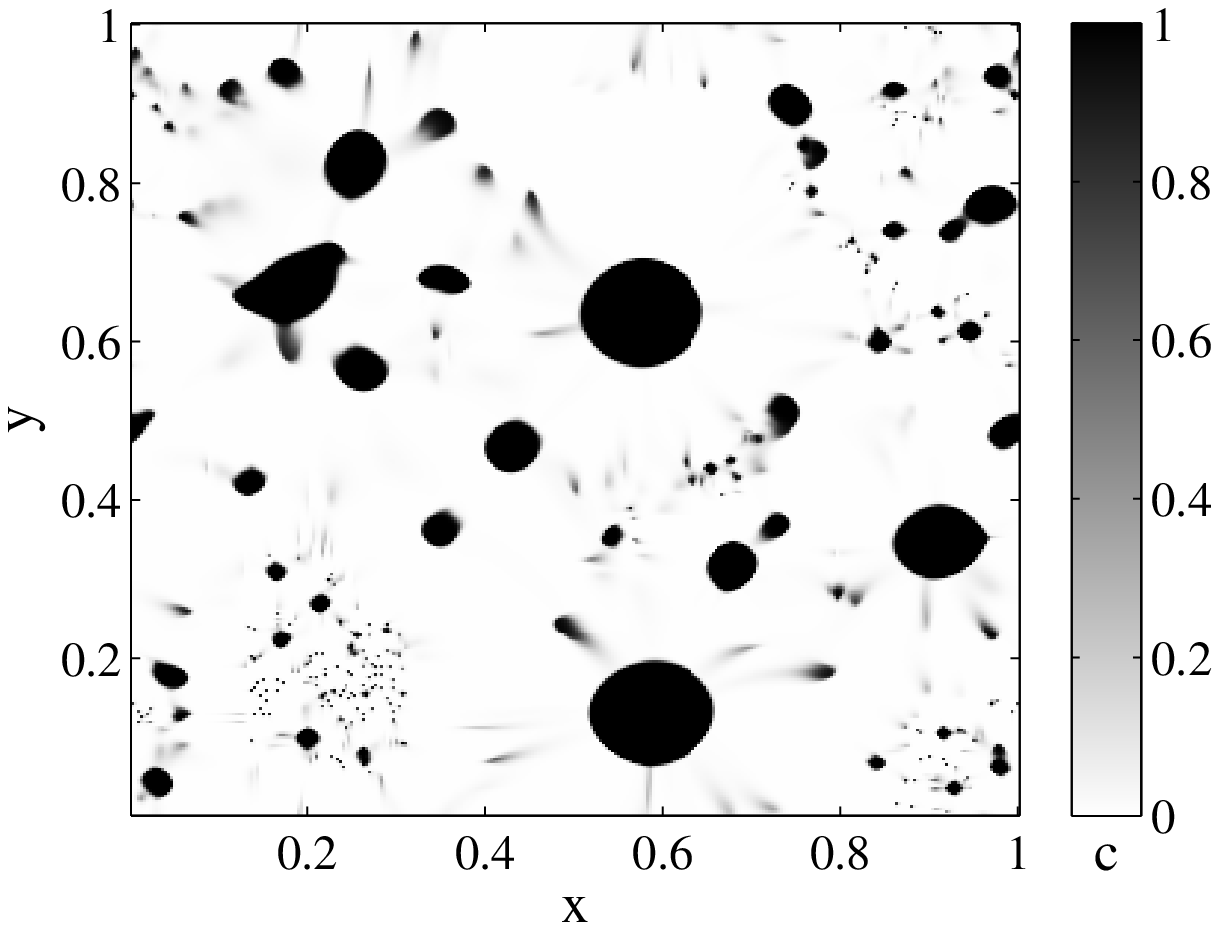} \\
  \textbf{e)} & \textbf{f)}  \\
  \includegraphics[width=6.4cm]{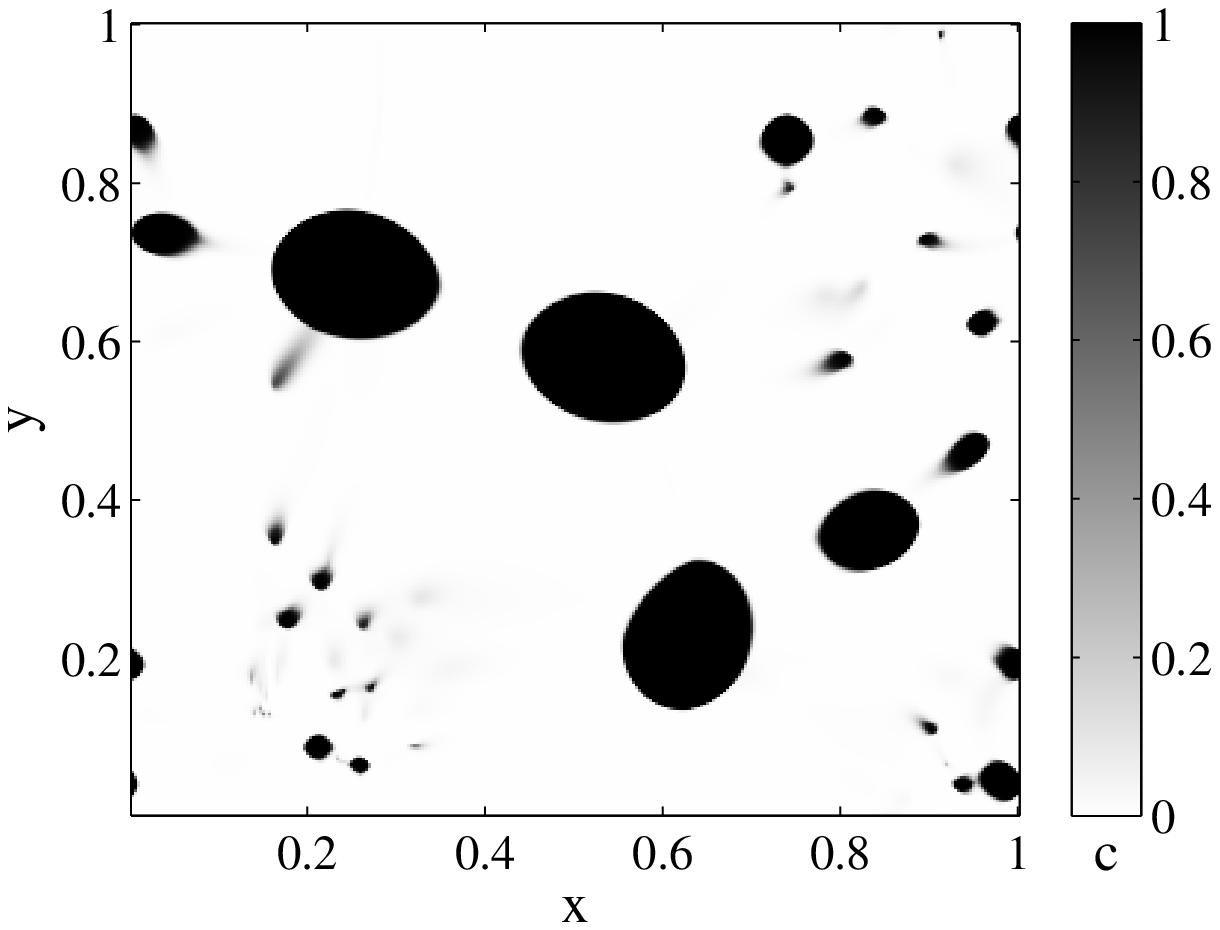} &  \includegraphics[width=6.4cm]{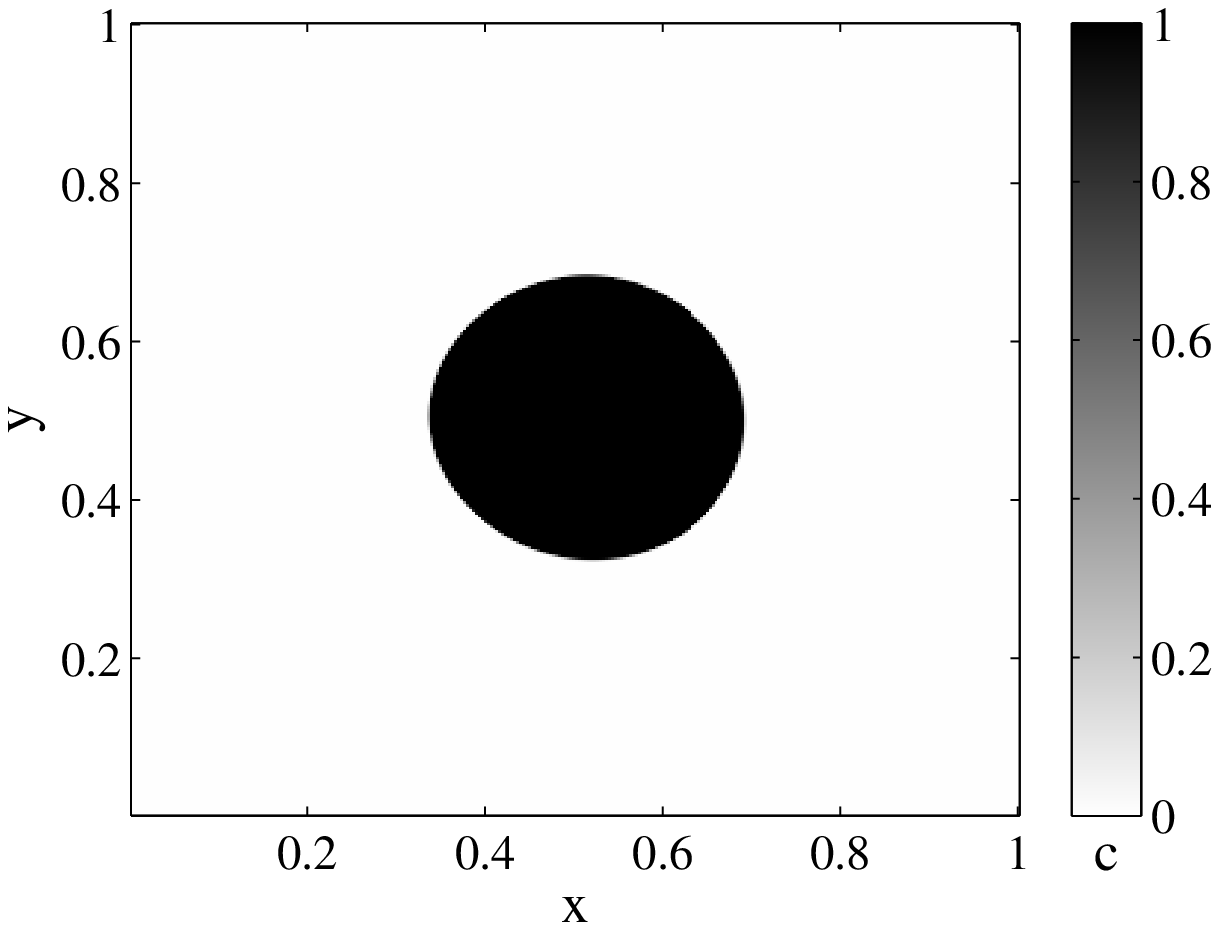}
\end{tabular}
\caption{Evolution of the concentration $\rs$ in the case of a desired velocity given by~\eqref{KS-1}-\eqref{KS-2}, with periodic  boundary conditions~\eqref{KS-2a}-\eqref{KS-2c}. The domain is initially filled randomly with 10\% moving species. Mesh resolution: $N_x \times N_y = 300 \times 300$.}\label{fig:KS-1}
\end{center}
\end{figure}

\begin{figure}
\begin{center}
\begin{tabular}{ll}
\textbf{a)} & \textbf{b)}  \\
  \includegraphics[width=6.4cm]{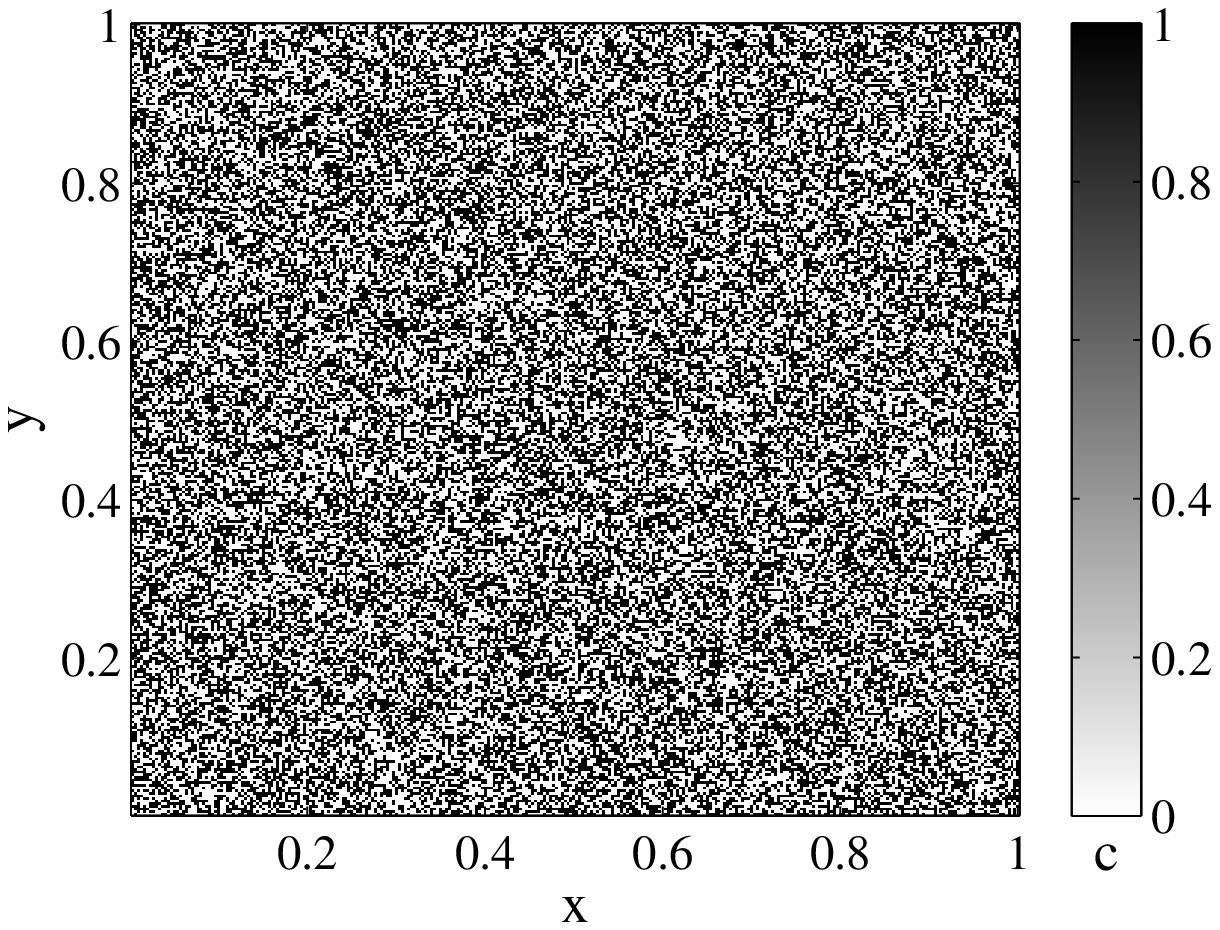} & \includegraphics[width=6.4cm]{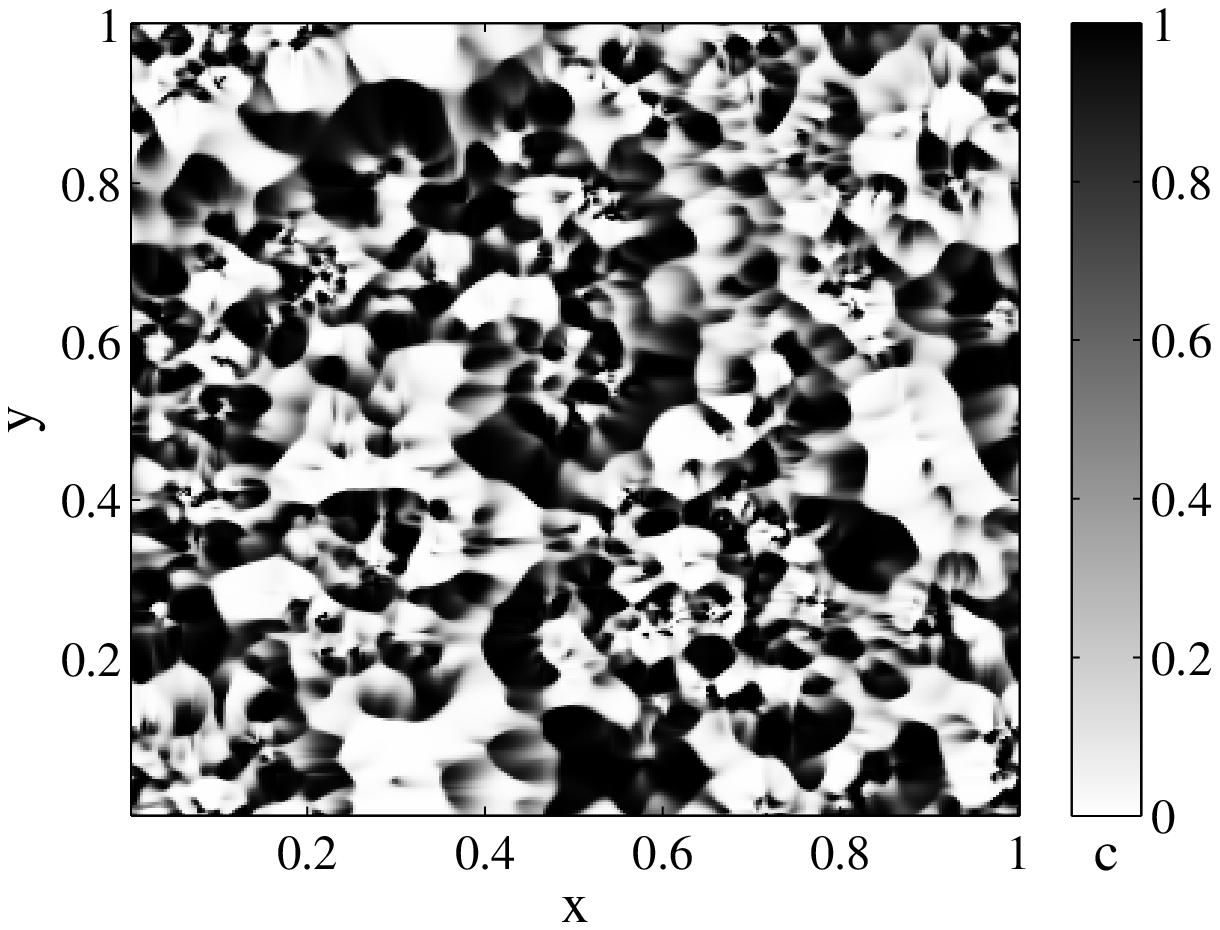} \\
 \textbf{c)} &  \textbf{d)}  \\
  \includegraphics[width=6.4cm]{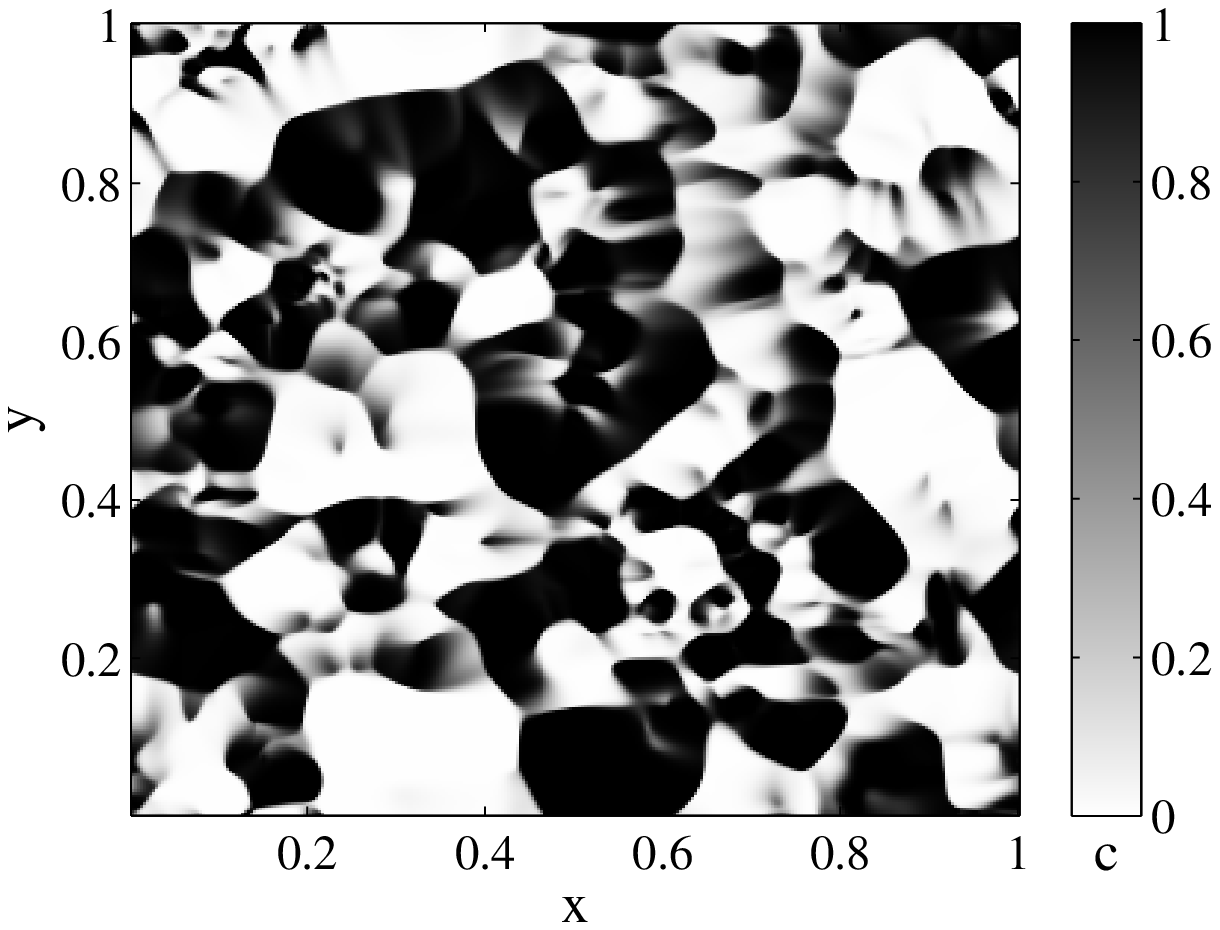} &  \includegraphics[width=6.4cm]{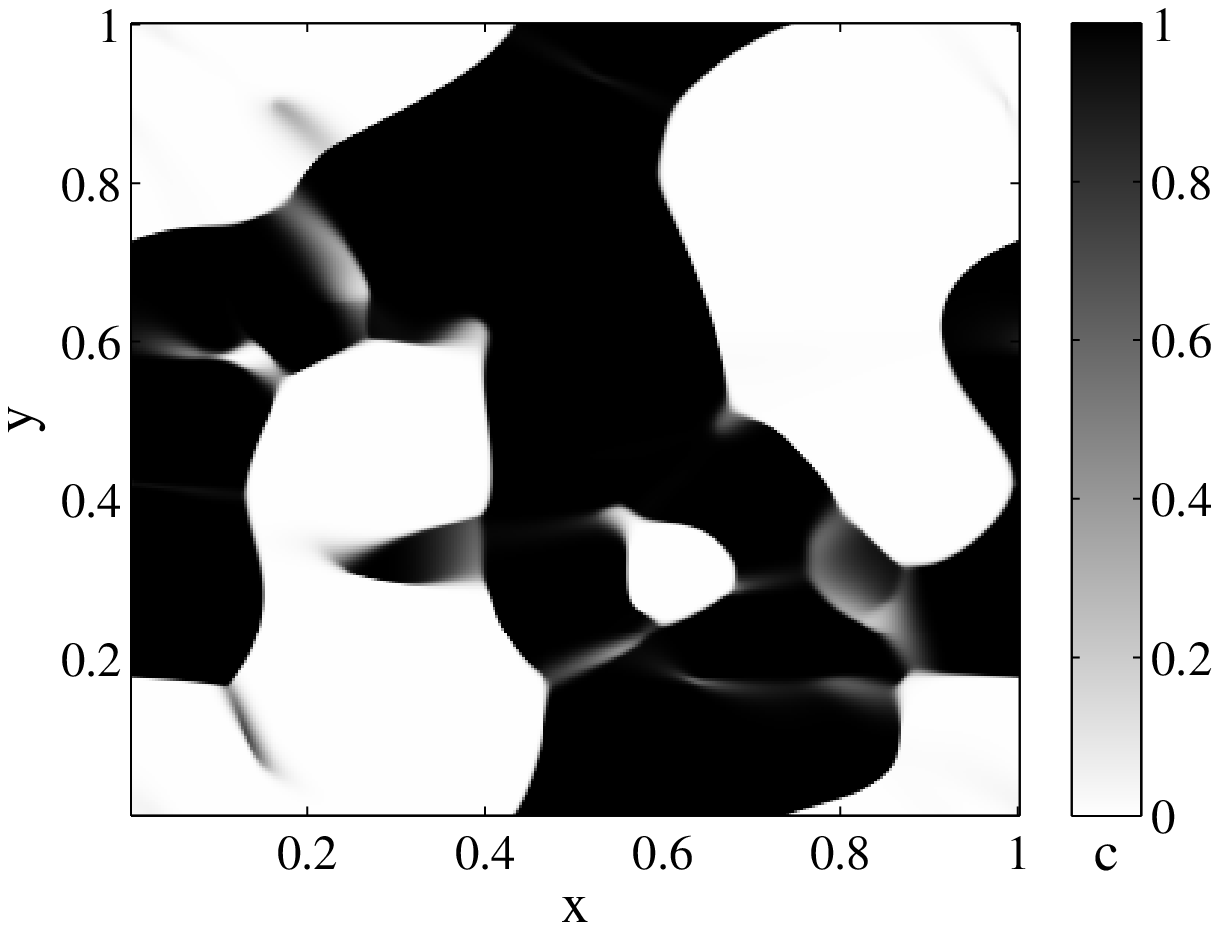} \\
  \textbf{e)} &  \textbf{f)}  \\
  \includegraphics[width=6.4cm]{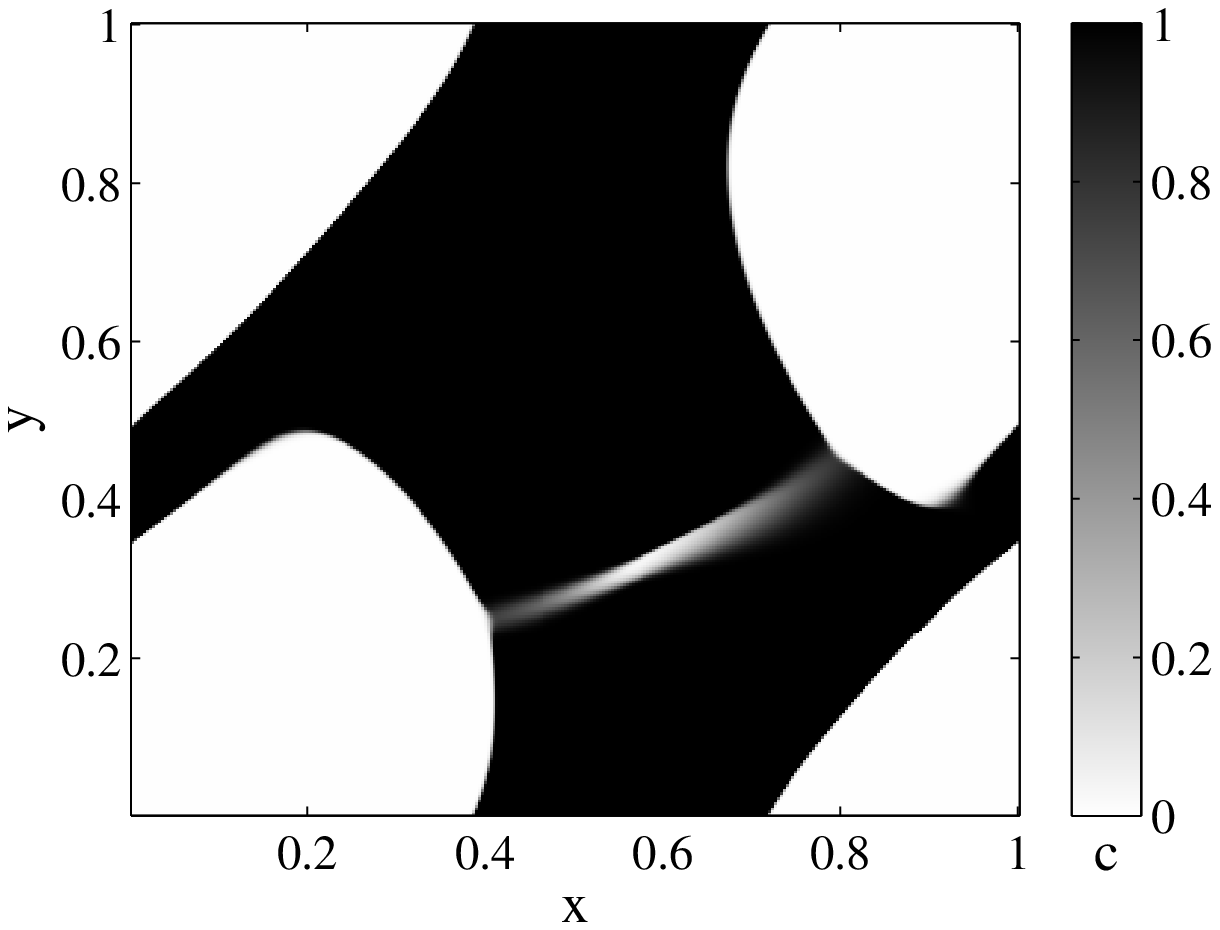} &  \includegraphics[width=6.4cm]{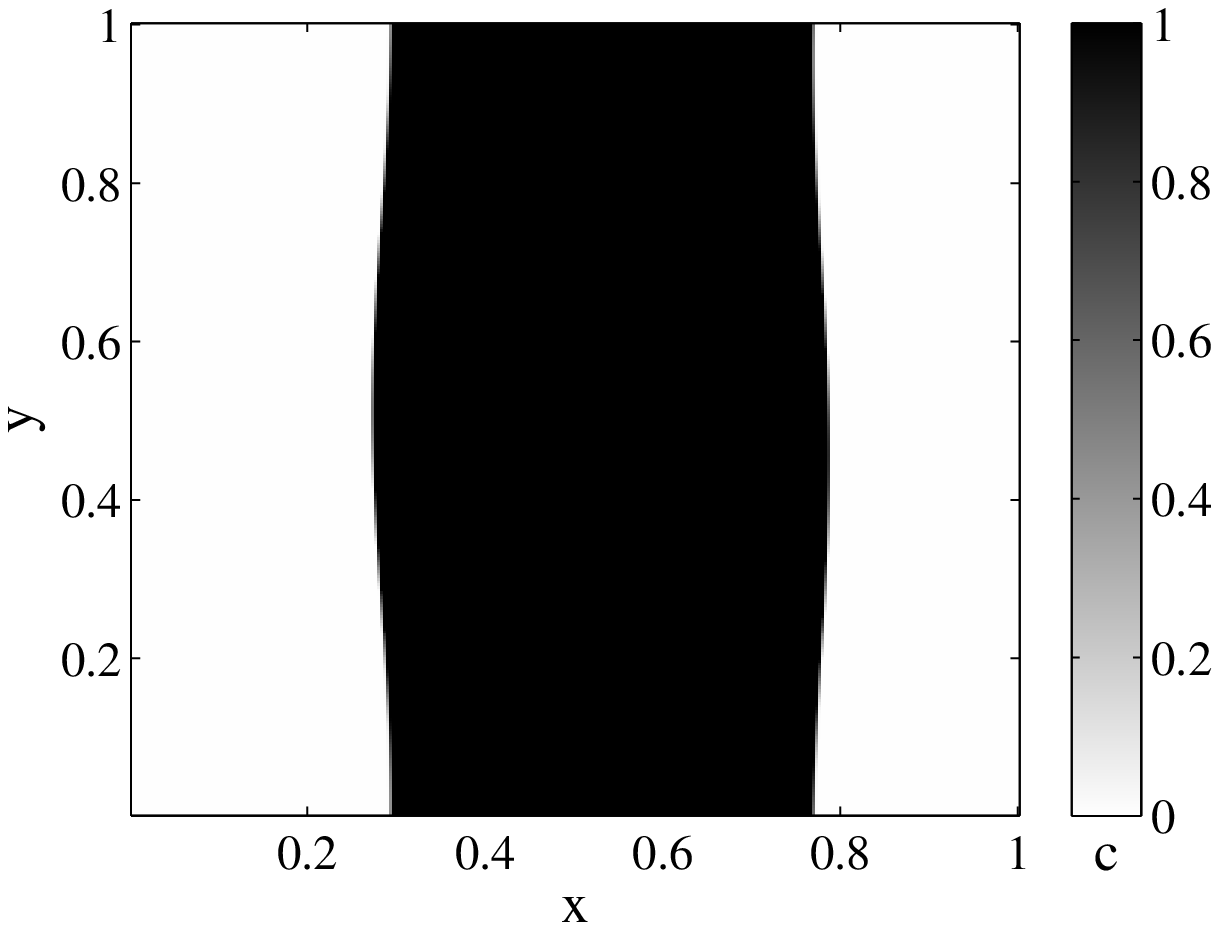}
\end{tabular}
\caption{Evolution of the concentration $\rs$ in the case of a desired velocity given by~\eqref{KS-1}-\eqref{KS-2}, with periodic  boundary conditions~\eqref{KS-2a}-\eqref{KS-2c}. The domain is initially filled randomly with 50\% moving species. Mesh resolution: $N_x \times N_y = 300 \times 300$.}\label{fig:KS-2}
\end{center}
\end{figure}

\section{Conclusion, extensions}
\label{sec:conc}

We proposed a model to describe the motion of mixtures of cell populations in a saturated medium with a constraint on the local  density. We provided an adapted theoretical framework, based on a reformulation of the model as a gradient flow in a product space of densities, and proposed a discretization strategy which enjoys reasonable stability and accuracy properties.

In terms of modelling, as mentioned in the introduction, any number $N$ of species can be handled, as far as equations are considered (possible issues concerning boundary conditions are disregarded here).  Saturation writes
simply $\rho_1+\dots+\rho_N = 1$,  we have an  advection equation for each species
$$
\partial _t \ri +\Div (\ri\left (\Vi+\w)\right)  =0\virg i=1,\dots, N,\\
$$
and the common correction velocity verifies
$$
\Div \w = -\Div \left (\sum_{i=1}^N \ri \Vi \right) .
$$
It could be of particular importance to include also proliferation phenomena. Denoting by $\beta_i$ the local rate of creation (possibly depending explicity upon $\rho_i$ or other densities), conservation equations become
$$
\partial _t \ri +\Div (\ri\left (\Vi+\w)\right)  =\beta_i\virg i=1,\dots, N,\\
$$
and the constraint on $\w$ accounts for mass creation:
$$
\Div \w = \sum_{i=1}^N \beta_i-\Div \left (\sum_{i=1}^N \ri \Vi \right) .
$$
Note that, in this situation, global non conservation rules out the use of no-flux conditions (or periodic setting). Considering a bounded domain $\Omega$ one can consider its boundary as a free outlet (interaction pressure  is set at $0$), so that a non conservative flux through boundary can compensate  the unbalance of mass in the domain. 
In case of  mass creation ($\beta_i >0$), one can  expect outflow through the boundary, so that no additional condition is needed for the advection  equations. In case some individual velocities $\V_i + \w$  may point inward the domain, the system has to be complemented with appropriate conditions  (e.g. prescribed value of ingoing densities).

As for theoretical issues, non conservation  rules out the  standard framework of gradient flow in the Wasserstein space, which is dedicated to  measures with constant total mass. Yet, as suggested in~\cite{MauRouSan}, generalizations of the JKO scheme, in the spirit of prediction-correction algorithms (or catching-up algorithms for sweeping processes, see for example~\cite{moreau}) might be expected to provide well-posedness results.

One may also wonder whether it could be possible to recover evolution equations for a single species, as in the case of crowd motion models (\cite{MauRouSan}). In this situation, species $2$ is replaced by empty space, which can be moved at no cost. Our proposed model could be seen as an attempt  to replace a unilateral constraint $\rho_1\leq 1$ (which is quite delicate to handle, see again~\cite{MauRouSan}), by an equality $\rho_1+\rho_2 = 1$ with $\rho_2\geq 0$.
It can be done formally by lowering the importance of species $2$, more precisely by  defining the correction velocity as the one which minimizes a weighted $L^2$ norm:
$$
\w = \hbox{argmin} \int_\Omega k_\varepsilon(\rho_1,\rho_2)\abs { \vv} ^2,
$$
where the minimum is taken among all those fields which satisfy $\Div \w = -\Div(\rho_1\Va)$ (in the case $\Vb = 0$), 
and 
$$
k_\varepsilon(\rho_1,\rho_2) = \rho_1 + \varepsilon \rho _2.
$$
One recovers formally an evolution equation for $\rho_1$ with unilateral constraint $\rho_1 \le 1$, yet with a significant difference: if one considers a saturated zone of $2$ surrounded by a saturated zone of $1$, the asymptotic ($\varepsilon\rightarrow 0$)  model  sees the inclusion as globally incompressible, which is not the case if one imposes simply $\rho_1 \leq 1$.

\medskip
Received xxxx 20xx; revised xxxx 20xx.
\medskip


\begin{thebibliography}{99}

   \bibitem{Amb} L. Ambrosio, N. Gigli, G. Savare, {\it Gradient flows in metric spaces and in the space of probability measures}, Lectures in Mathematics, ETH Z\"{u}rich (2005).
   
    \bibitem{AmbSav} L. Ambrosio, G. Savare, {\it Gradient flows of probability measures}, Handbook of differential equations, Evolutionary equations, \textbf{3} (ed. by C.M. Dafermos and E. Feireisl, Elsevier, 2007).
    
    \bibitem{BenBre} J.-D. Benamou, Y. Brenier, {\it A computational fluid mechanics solution to the Monge-Kantorovich mass transfer problem}, Numer. Math., \textbf{84}(3) (2001) 375--393.
    
    \bibitem{DalibardPerthame} A.L. Dalibard, B. Perthame, 
{\it Existence of solutions of the hyperbolic Keller-Segel model}, 
Trans. Amer. Math. Soc., \textbf{361}(5) (2009), 2319--2335. 


   \bibitem{DeG} E. De Giorgi, {\it New problems on minimizing movements}, Boundary
Value Problems for PDE and Applications, C. Baiocchi and J. L. Lions
eds. (Masson, 1993) 81--98.
    
    \bibitem{dolak}
Y. Dolak, C. Schmeiser, 
{\it The Keller-Segel model with logistic sensitivity function and small diffusivity}, 
SIAM J. Appl. Math., \textbf{66}(1) (2005), 286--308.

\bibitem{OttoGigli}
N. Gigli, F. Otto, 
{\it Entropic Burgers' equation via a minimizing
movement scheme based on the Wasserstein
metric}, 
submitted.

    \bibitem{JorKinOtt} R. Jordan, D. Kinderlehrer, F. Otto, {\it The variational formulation of the Fokker-Planck equation},  SIAM J. Math. Anal., \textbf{29}(1) (1998) 1--17.
    
    
\bibitem{KelSeg} E.F Keller, L.A. Segel, {\it Model for chemotaxis}, J. Theor. Biol., \textbf{30} (1971) 225--234.


\bibitem{KS}
R. Kimmel, J. Sethian, {\it Fast marching methods for computing distance maps and shortest paths}, Technical Report \textbf{669},
CPAM, Univ. of California, Berkeley (1996).


   \bibitem{MauRouSan} B. Maury, A. Roudneff-Chupin, F. Santambrogio, {\it A macroscopic crowd motion model of gradient flow type}, Math. Mod. Meth. Appl. Sci., to appear.
   
\bibitem{moreau}
   J.J. Moreau, {\it Evolution problem associated with a moving convex set in a Hilbert space},  J. Differential Equations, \textbf{26} (3), 347?374, 1977.

\bibitem{otto97}
F. Otto,  Weinan E.,
{\it Thermodynamically driven incompressible fluid mixtures}, 
J. Chem. Phys. 107, 10177 (1997).


\bibitem{Perthame} B. Perthame,  {\it PDE models for chemotactic movements: parabolic, hyperbolic and kinetic},  Appl. Math.,  \textbf{49}(6)  (2004) 539--564.

\bibitem{Peyre-2008}
G. Peyre,
{\it Toolbox Fast Marching - A toolbox for Fast Marching and level sets computations} (2008), software.




\bibitem{RR} M. Renardy, R.C. Rogers,  {\it An introduction to partial differential equations},  Texts in App. Math.,  \textbf{13}, Springer-Verlag, New York (2004).




    
   \bibitem{Vil1} C. Villani, {\it Topics in optimal transportation}, Grad. Stud. Math., \textbf{58} (AMS, Providence 2003).
   
   \bibitem{Vil2} C. Villani, {\it Optimal transport, old and new}, Grundlehren der mathematischen Wissenschaften, \textbf{338} (2009).
   

   

\end{thebibliography}
\end{document}